\newtheorem{thm}{Theorem}
\newtheorem{prop}[thm]{Proposition}
\newtheorem{cor}[thm]{Corollary}
\newtheorem{claim}[thm]{Claim}
\newtheorem{conj}[thm]{Conjecture}
\title{On the tree packing conjecture}
\author{J\'ozsef Balogh\thanks{Department of Mathematical Sciences,
University of Illinois at Urbana-Champaign, Urbana, Illinois 61801, USA {\tt jobal@illinois.edu}. Research supported by NSF CAREER Grant DMS-0745185, UIUC Campus Research Board Grant 13039, and OTKA Grant K76099.} \and Cory Palmer\thanks{Department of Mathematical Sciences,
University of Illinois at Urbana-Champaign, Urbana, Illinois 61801, USA {\tt ctpalmer@illinois.edu}. Research supported by OTKA Grant NK78439.}}
\begin{document}

\maketitle

\begin{abstract}
The  Gy\'arf\'as tree packing conjecture states that any set of $n-1$ trees $T_{1},T_{2},\dots, T_{n-1}$ such that $T_i$ has $n-i+1$ vertices pack into $K_n$.
We show that $t=\frac{1}{10}n^{1/4}$ trees $T_1,T_2,\dots, T_t$ such that $T_i$ has $n-i+1$ vertices pack into $K_{n+1}$ (for $n$ large enough). 
We also prove that any set of $t=\frac{1}{10}n^{1/4}$ trees $T_1,T_2,\dots, T_t$ such that no tree is a star and $T_i$ has $n-i+1$ vertices pack into $K_{n}$ (for $n$ large enough).
Finally, we prove that $t=\frac{1}{4}n^{1/3}$ trees $T_1,T_2,\dots, T_t$ such that $T_i$ has $n-i+1$ vertices pack into $K_n$ as long as each tree has maximum degree at least $2n^{2/3}$ (for $n$ large enough).
One of the main tools used in the paper is the famous spanning tree embedding theorem of Koml\'os, S\'ark\"ozy and Szemer\'edi \cite{kssz-paper}.
\end{abstract}

\section{Introduction}
A set of (simple) graphs $G_1,G_2, \dots, G_n$ are said to \emph{pack} into a graph $H$ if $G_1,G_2, \dots, G_n$ can be found as pairwise edge-disjoint subgraphs in $H$. 
In this paper we are concerned with
the case when each $G_i$ is a tree and $H$ is a complete graph on $n$ vertices, denoted by $K_n$.
The famous tree packing conjecture (TPC) posed by Gy\'arf\'as (see \cite{GyLe}) states:

\begin{conj}\label{theconjecture}
Any set of $n-1$ trees $T_n,T_{n-1},\dots, T_{2}$ such that $T_i$ has $i$ vertices pack into $K_n$.
\end{conj}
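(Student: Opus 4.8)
The plan is to exploit the fact that this packing is \emph{tight}: since $T_i$ has $i-1$ edges, the trees carry a total of $\sum_{i=2}^{n}(i-1)=\binom{n}{2}=|E(K_n)|$ edges, so a packing is in fact a \emph{decomposition} of $K_n$ into the prescribed trees. I would embed the trees one at a time in decreasing order of size, $T_n, T_{n-1},\dots, T_2$, always placing $T_i$ into the ``leftover'' graph $G_i$ consisting of those edges of $K_n$ not yet used by $T_n,\dots,T_{i+1}$. The first tree $T_n$ is spanning and each early $T_i$ is nearly spanning, so the natural engine is the spanning-tree embedding theorem of Koml\'os, S\'ark\"ozy and Szemer\'edi \cite{kssz-paper}: provided $G_i$ has minimum degree at least $(\tfrac12+\epsilon)n$ and $T_i$ has slowly growing maximum degree, a copy of $T_i$ can be found inside $G_i$.

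There are two features one must control throughout. First, a tree may have large maximum degree (a star being the extreme case), which is incompatible with the KSS hypothesis; I would handle such vertices by reserving at the outset a small set $R$ of ``hub'' vertices of $K_n$, routing the high-degree branch vertices of each tree onto $R$, and embedding only the bounded-degree remainder of $T_i$ by KSS. Second, each embedding deletes $i-1$ edges, so the average degree of $G_i$ decreases steadily from $n-1$ toward $0$; the idea is to embed the trees so that the edge deletions are spread as evenly as possible, keeping $G_i$ close to a quasirandom graph of density $2|E(G_i)|/n^2$ and hence keeping its minimum degree near its average degree. A final absorption stage, reserving a structured sub-pattern of $K_n$ into which the last tiny trees can always be fitted, would complete the decomposition once the trees have become small.

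The decisive difficulty — and the reason the conjecture remains open — is the \emph{density barrier}. The KSS theorem, and every comparable spanning-tree embedding result, requires the host graph to have density bounded away from $\tfrac12$ from above, whereas a tight decomposition forces the leftover $G_i$ all the way down through density $\tfrac12$ and on to density $0$. Once fewer than about $\tfrac12\binom{n}{2}$ edges remain, no hypothesis of the form $\delta(G_i)\ge(\tfrac12+\epsilon)n$ can hold, yet the trees still being embedded are of linear size, so greedy or purely random embedding no longer suffices. Controlling the joint distribution of the leftover edges so that a near-spanning tree can still be embedded in a graph of density below $\tfrac12$, while simultaneously keeping the high-degree tree vertices packable, is exactly the bottleneck that the partial results of this paper (packing only $O(n^{1/4})$ or $O(n^{1/3})$ trees, where the leftover stays dense) are designed to sidestep. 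I expect that any full proof will require a genuinely new embedding tool valid below the $\tfrac12$ threshold — most plausibly an iterative nibble-type packing argument that maintains quasirandomness of the leftover across the entire range of densities.
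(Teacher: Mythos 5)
The statement you were asked about is Conjecture~\ref{theconjecture} itself --- the Gy\'arf\'as tree packing conjecture --- which is \emph{open}: the paper contains no proof of it, only partial results (packing $\frac{1}{10}n^{1/4}$ or $\frac{1}{4}n^{1/3}$ trees under various hypotheses). Your proposal, as you yourself concede in its final paragraph, is a programme rather than a proof, and the gap you name is real and fatal to the plan as written: the Koml\'os--S\'ark\"ozy--Szemer\'edi theorem (Theorem~\ref{kssz}) needs $\delta(G_i)\ge(\tfrac12+\delta)n$, while the tightness you correctly compute ($\sum_{i=2}^n (i-1)=\binom{n}{2}$, so any packing is a decomposition) forces the leftover graph through density $\tfrac12$ and down to $0$ while trees of linear order remain to be embedded. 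Neither the ``quasirandom leftover'' maintenance nor the ``absorption stage'' is constructed --- no lemma is stated that would embed a near-spanning tree in a host of density below $\tfrac12$ --- so the argument simply stops where the known technology stops. This is precisely the regime the paper avoids by packing only $O(n^{1/3})$ trees, so that the leftover never drops meaningfully below density $1$ and the reserved sets $K_h$ and $K_{8t}$ can handle all the delicate completions.

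One further concrete flaw, independent of the density barrier: your hub idea of reserving a small set $R$ and routing all high-degree tree vertices onto $R$ fails quantitatively. If many of the trees are stars (or near-stars), the centers carry total degree on the order of $\sum_i (i-1)=\binom{n}{2}$, whereas the vertices of $R$ can absorb at most $|R|(n-1)$ edge-endpoints; thus $|R|$ would have to be $\Theta(n)$, defeating the purpose of a reserved set. Indeed, in the known star/path packings (Zaks--Liu, cited as Claim~\ref{stars-paths} in the paper) the star centers are necessarily spread over essentially all vertices of $K_n$. The paper's own device --- a set $H_i$ of size $O(t)$ \emph{per tree}, packed vertex-disjointly into a reserved $K_h$ --- only works because there are merely $t=O(n^{1/3})$ trees, so $\sum_i|H_i|=O(t^2)\ll n$; it does not scale to $n-1$ trees. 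So the verdict is: the conjecture remains open, the paper proves no such statement, and your proposal correctly diagnoses the bottleneck but supplies neither of the two missing ingredients (a sub-$\tfrac12$-density embedding tool and a degree-accounting scheme for high-degree vertices that works for star-heavy sequences).
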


Bollob\'as suggested a weakening of TPC in the Handbook of Combinatorics \cite{handbook}:

\begin{conj}
For every $k \geq 1$ there is an $n(k)$ such that if $n \geq n(k)$, then
any set of $k$ trees $T_1,T_2,\dots, T_k$ such that $T_i$ has $n-i+1$ vertices pack into $K_n$.
\end{conj}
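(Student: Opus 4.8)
The plan is to exploit that here $k$ is a fixed constant, so the total number of edges to be placed, $\sum_{i=1}^{k}(n-i)=kn-\binom{k+1}{2}$, is tiny compared with $\binom{n}{2}$. Thus edge density is never the obstruction; the three genuine difficulties are that the trees are (nearly) spanning and must be edge-disjoint, that $T_i$ may avoid only $i-1$ of the $n$ host vertices, and that some $T_i$ may have very high degree (for instance a star), all while the host is exactly $K_n$ with no spare vertex. I would embed the trees one at a time, in the order $T_1,T_2,\dots,T_k$, deleting the edges of each embedded copy before placing the next, and I would combine the spanning-tree theorem of Koml\'os, S\'ark\"ozy and Szemer\'edi \cite{kssz-paper} for the bounded-degree bulk of each tree with a direct, hand-made placement of its few high-degree vertices.

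First, $T_1$ is spanning but $K_n$ is complete, so $T_1$ embeds trivially. For the induction step, after $T_1,\dots,T_{i-1}$ are placed let $G_i=K_n-\bigcup_{j<i}E(\varphi_j(T_j))$ be the residual graph. The crucial bookkeeping is that each already-embedded tree contributes to a host $h$ exactly the $T_j$-degree of the single vertex mapped to $h$; hence, away from a bounded exceptional set, the accumulated degree at $h$ equals $\sum_{j<i}\deg_{T_j}(\varphi_j^{-1}(h))$, which stays $o(n)$ as long as the high-degree vertices of the trees are kept off $h$. On the non-exceptional hosts $G_i$ therefore retains minimum degree $(1-o(1))n\ge(\tfrac12+\varepsilon)n$. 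If $T_i$ has maximum degree at most $cn/\log n$, I would invoke \cite{kssz-paper} to embed it into the induced residual graph on a suitable set of $n-i+1$ hosts, using the $i-1$ vacancies to dodge any exceptional vertices.

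The real work is the high-degree vertices. In a tree on at most $n$ vertices the degree sum is below $2n$, so $T_i$ has fewer than $2\log n/c$ vertices of degree exceeding $cn/\log n$ (its \emph{heavy} vertices) and at most a constant number of \emph{giant} vertices of degree above $n/2$. I would place the giants of each tree at fresh hosts, forming the bounded exceptional set $X$; a fresh host receiving a giant of degree $d$ has residual $n-1-d\ge 1$, and only the center of a spanning star—possible solely for $T_1$—can drive the residual to $0$. The remaining moderately-heavy vertices and the bounded-degree forest are then attached by the usual layered embedding, with the proviso that a low-residual exceptional host only ever receives a leaf or a low-degree vertex. To make the heavy parts of different trees edge-disjoint I would use a priority scheme as in the packing of stars: among vertices competing for the same host-edges, the tree of larger order claims the edge while the tree of smaller order—which has strictly more spare vertices—yields, rerouting its conflicting edge or skipping the host. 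A short accounting then shows that the total number of hosts any $T_i$ is forced to avoid, together with the exceptional hosts it must treat gently, never exceeds its budget $i-1$, the worst case (a spanning star) having been disposed of at the very first step.

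The main obstacle is exactly this coordination: I must run the Koml\'os--S\'ark\"ozy--Szemer\'edi embedding on a host graph that carries a bounded but nonempty set of nearly-saturated vertices, forcing those vertices to receive only low-degree images, while simultaneously respecting the rigid vacancy budget $i-1$ and the absence of any extra vertex. This is precisely where packing into $K_n$ is harder than into $K_{n+1}$, and where stars, excluded from the paper's $K_n$ theorem, must be readmitted. I expect the delicate point to be proving that across all $k$ trees the heavy-vertex placements can be chosen so that residual degrees never collapse below what the next tree needs; I would establish this by the priority and charging argument above, together with the fact that, $k$ being constant, there are only $O(k)$ giant vertices to place in total.
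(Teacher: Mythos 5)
You were asked about the second conjecture in the paper (Bollob\'as's weakening of the TPC), and the first thing to say is that the paper contains no proof of it: it is stated as an open conjecture, and the paper's actual results are strictly weaker. Theorem~\ref{main2} packs $\frac{1}{10}n^{1/4}$ trees into $K_{n+1}$ (one spare vertex), Proposition~\ref{main2-cor} packs them into $K_n$ only under the extra hypothesis that no tree is a star, and Theorem~\ref{main1} packs into $K_n$ only under structural leaf/degree conditions. So your proposal cannot be measured against a proof in the paper; it has to stand on its own, and it does not: it is a plan whose two load-bearing steps are precisely the points the paper's authors could not overcome in $K_n$.

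Concretely: first, you propose to run the Koml\'os--S\'ark\"ozy--Szemer\'edi theorem (Theorem~\ref{kssz}) on a residual host graph containing a ``bounded but nonempty set of nearly-saturated vertices.'' Theorem~\ref{kssz} requires $\delta(G)\geq(\frac{1}{2}+\delta)n$ at \emph{every} vertex; no version tolerating low-degree exceptional vertices is available, and your substitute --- the ``priority scheme'' and ``short accounting'' asserting that each $T_i$ stays within its vacancy budget of $i-1$ --- is never formulated as a lemma, let alone proved. That accounting \emph{is} the open problem, not a routine check. The paper's architecture is designed expressly to avoid this situation: each tree is split into a bounded-degree forest $F_i$ packed by Corollary~\ref{kssz-cor} into a clean block $K_{n-h-25t}$, a high-degree kernel $H_i$ packed vertex-disjointly into a separate block $K_h$, and deleted leaves/degree-2 vertices restored by the Hall-type Claims~\ref{stars-claim}, \ref{matching-claim} and \ref{matching2} --- so the embedding theorem never sees a saturated vertex. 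Even with all that machinery, the authors needed either the extra vertex of $K_{n+1}$ or the no-star hypothesis. Second, your handling of stars is wrong in detail: you claim a spanning star is ``possible solely for $T_1$'' and that disposing of it first removes the worst case, but any $T_i$ may be a star on $n-i+1$ vertices with center degree $n-i$, and it is exactly the star case (no second vertex $u_i$ carrying a leaf) that forces Case~B and the auxiliary vertex $K_1$ in the proof of Theorem~\ref{main2}. Your sketch therefore re-encounters, rather than resolves, the conjecture's genuine difficulty.
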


A number of partial results concerning the TPC are known. The first results are by Gy\'arf\'as and Lehel \cite{GyLe} who proved that
the TPC holds with the additional assumption that all but two of the trees are stars. Roditty \cite{Ro} confirmed TPC in the case when all but three trees are stars (see also \cite{GeKePa}).
Gy\'arf\'as and Lehel also showed that the TPC is true if each tree is either a path or a star. A second proof of this statement is by Zaks and Liu \cite{ZaLi}\footnote{An incorrect version of this 
proof appears in \cite{Gems}.}.
Bollob\'as \cite{Bo} showed that
any set of $k-1$ trees $T_k,T_{k-1},\dots, T_2$ such that $T_i$ has $i$ vertices pack into $K_n$ if $k \leq \frac{\sqrt{2}}{2}n$. 
Bollob\'as also noted that the bound on $k$ can be increased to $\frac{\sqrt{3}}{2}n$
if we assume the Erd\H os-S\'os conjecture is true (see \cite{Er46}).
Packing many large trees seems to be difficult. Hobbs, Bourgeois and Kasiraj \cite{HoBoKa} showed that any three trees $T_n,T_{n-1},T_{n-2}$ such that $T_i$ has $i$ vertices pack into $K_n$. 
A series of papers by Dobson \cite{Do1,Do2,Do3} concerns packing trees into $K_n$ with restrictions on the structure of each tree.

Instead of packing trees into the complete graph, a number of papers have examined packing trees into complete bipartite graphs. 
Hobbs, Bourgeois and Kasiraj \cite{HoBoKa} conjectured that $n-1$ trees $T_{n},T_{n-1},\dots, T_{2}$ such that $T_i$ has $i$ vertices pack into the complete bipartite graph
$K_{n-1, \lfloor n/2 \rfloor}$. The conjecture is true if each of the trees is a star or path (see Zaks and Liu \cite{ZaLi} and Hobbs \cite{Ho}). 
Yuster \cite{Yu} showed that $k-1$ trees $T_{k},T_{k-1},\dots, T_2$ such that $T_i$ has $i$ vertices pack into $K_{n-1,\lfloor n/2 \rfloor}$ if $k \leq \lfloor \sqrt{5/8}n \rfloor$ (improving the previously 
best-known bound on $k$ by Caro and Roditty \cite{CaRo}). 
Various generalizations of the tree packing conjecture were investigated by Gerbner, Keszegh and Palmer \cite{GeKePa}. Recently, B\"ottcher, Hladk\'y, Piguet and Taraz \cite{Hl} proved
an asymptotic version of the tree packing conjecture for trees with bounded maximum degree.

Notation will be standard (following e.g.\ \cite{Bo-book}). A vertex of a graph of degree $1$ is a
\emph{leaf}. A set of leaves in a graph are \emph{independent} if the
neighbors of the leaves are pairwise disjoint (i.e.\ the edges incident
to a set of independent leaves form a matching).
A \emph{leaf edge} is an edge incident to a vertex of degree $1$.
We denote by $G[H]$ the induced graph of $G$ on the vertex set $H$ and
by $G[H_1,H_2]$ the induced bipartite graph of $G$ with classes
$H_1,H_2$. The \emph{neighborhood} of a set of vertices $X$ is the set of vertices not in $X$ with a neighbor in $X$ (i.e.\ neighborhoods are not considered closed).
The maximum degree of a graph $G$ is denoted by $\Delta(G)$; the minimum degree by $\delta(G)$. For the sake of brevity, the set of vertices
of a graph $G$ will also be denoted by $G$. 

The set of first $k$ integers is denoted by $[k]$. For $a \in A$ we will write $A-a$ for $A-\{a\}$. 

Clearly a set of graphs $G_1,G_2,\dots, G_k$ pack into $H$ if there is a $k$-edge-coloring of $H$ where the graph induced by the edges of color $i$ contains a $G_i$. Generally we will pack a set of trees by starting with an uncolored complete graph and $k$-coloring the edges in a series of steps. Thus we call an edge \emph{uncolored} if it has not yet received a color.

We will suppress all integer part notation. We note that in an effort to make the proofs easier many of the multiplicative constants are allowed to be larger than is necessary.
Our main results are Theorems \ref{main1} and \ref{main2}:

\begin{thm}\label{main1}
Let $n$ be sufficiently large and let $t=\frac{1}{4}n^{1/3}$.
Then the trees $T_1,T_2,\dots, T_t$ pack into $K_n$ if for each $i$ we have $|T_i| = n-i+1$ and
at least one of the following holds:
\begin{enumerate}
\itemsep0em
\item[(1)] $T_i$ has a set of at most $n^{1/3}$ vertices such that the union of their neighborhoods contains at least $n^{2/3}$ leaves.
\item[(2)] $T_i$ has at least $n^{2/3}$ independent leaves.
\end{enumerate}
\end{thm}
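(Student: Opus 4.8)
The plan is to $t$-color the edges of $K_n$ greedily, one tree at a time, so that the edges of color $i$ contain a copy of $T_i$, while maintaining the invariant that the graph $U_i$ of still-uncolored edges has minimum degree at least $(\tfrac12+\varepsilon)n$ on the vertices we still intend to use. This invariant is exactly the hypothesis of the spanning-tree embedding theorem of Koml\'os, S\'ark\"ozy and Szemer\'edi \cite{kssz-paper}, which I will invoke to place the bulk of each tree. The budget is favorable: the $t$ trees together have fewer than $tn<\tfrac14 n^{4/3}$ edges, so the total color-degree over all $n$ vertices stays below $\tfrac12 n^{4/3}$ and a typical vertex loses only $O(n^{1/3})$ of its degree to coloring. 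The only real danger to the invariant comes from the handful of vertices that must carry a large degree because they are images of a high-degree vertex of some $T_i$.

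To embed a single $T_i$ I would split it into a high-degree part and a bounded-degree part. Since the degrees in a tree on at most $n$ vertices sum to less than $2n$, at most $O(\log n)$ of its vertices have degree exceeding the threshold $\Delta_0=\Theta(n/\log n)$ permitted by \cite{kssz-paper}, so all but $O(\log n)$ vertices are harmless. For the bounded-degree part I would pad it with extra leaves to a spanning tree of $U_i$ of maximum degree at most $\Delta_0$ and embed it by KSS. Hypotheses (1) and (2) are what let me handle the high-degree vertices and the non-spanning slack simultaneously: I first set aside a set of at least $n^{2/3}$ leaves of $T_i$---either the $\ge n^{2/3}$ leaves clustered on the small hub set of (1), or the $\ge n^{2/3}$ independent leaves of (2)---embed the resulting reduced tree, and then reattach the removed leaves at the very end. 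Because each high-degree vertex is, by (1)/(2), leaf-rich, most of its incident edges run to leaves that can be parked on the vertices of $K_n$ left unused by $T_i$ (there are at least $n^{2/3}$ of them), and the reattachment itself is a bipartite matching that succeeds by Hall's condition since $U_i$ is dense.

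The hard part is the interaction between these high-degree vertices and the global degree budget. A single high-degree vertex---think of the center of a near-spanning star, which is covered by (1)---dumps up to roughly $n$ colored edges onto one vertex of $K_n$ and would, by itself, destroy the KSS minimum-degree hypothesis there for every later tree. The core of the argument is therefore a placement and budgeting scheme guaranteeing that (a) the images of high-degree vertices across all trees can be chosen pairwise distinct and on vertices we are willing to saturate, (b) the number of saturated vertices stays below the slack $n-|T_i|$ that each later non-spanning tree enjoys, so that later trees can simply avoid them while KSS is applied on the remaining, still-dense part, and (c) enough uncolored edges survive at each reattachment step for the leaf-matching to exist. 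Balancing (a)--(c), that is, keeping a large reservoir of low-color-degree vertices for KSS while absorbing the high-degree vertices together with their leaves, is the main obstacle, and it is precisely here that conditions (1) and (2) are used: they convert troublesome high degree into a large and flexible supply of leaves that may be placed on spare vertices without constraining the rest of the embedding.
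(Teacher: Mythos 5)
There is a genuine gap, and it sits exactly where you park it: your step (b) --- ``the number of saturated vertices stays below the slack $n-|T_i|$ \dots\ so that later trees can simply avoid them'' --- is quantitatively impossible, and the rest of the outline leans on it. The slack of $T_i$ is $n-|T_i| = i-1$, so $T_2$ can avoid exactly one vertex of $K_n$. But already $T_1$ may be a double star with two centers of degree about $n/2$ (this satisfies condition (1)), saturating two vertices; $T_2$ cannot avoid both. The problem compounds: applying the Koml\'os--S\'ark\"ozy--Szemer\'edi theorem tree-by-tree to near-spanning trees gives you no control over \emph{which} vertices of $K_n$ receive the images of vertices of degree up to $cn/\log n$, so after $i-1$ rounds the number of vertices whose uncolored degree has dropped below $(\frac{1}{2}+\delta)n$ can be a constant multiple of $i-1$ with constant exceeding $1$ (each tree can have several vertices of degree $\varepsilon n$, each capable of ruining a fresh vertex), which outruns the slack $i-1$ available for avoidance. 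There is also an internal inconsistency in your reattachment step: once you pad the bounded-degree part to a \emph{spanning} tree of $U_i$, the ``at least $n^{2/3}$ vertices left unused by $T_i$'' do not exist --- $T_i$ misses only $i-1\leq t-1 < n^{1/3}$ vertices of $K_n$ --- so the leaf-parking count as written is wrong; it only makes sense if the reduced tree is embedded non-spanningly, which your KSS invocation does not do.

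The paper resolves precisely this obstacle, but by a different mechanism than avoidance: it never applies KSS per tree at all. It reserves zones $K_h\cup K_{8t}$ with $h=\frac{3}{4}n^{2/3}$, strips from each $T_i$ \emph{all} vertices of degree exceeding $n^{2/3}$ --- note there can be up to $2n^{1/3}=8t$ of these per tree, which is why the hub threshold must be $n^{2/3}$ rather than your $\Theta(n/\log n)$ with its $O(\log n)$ hubs --- together with $h-(i-1)$ leaves, so that the remaining forests $F_i$ all have the \emph{identical} order $n-h-8t$ and maximum degree below $\frac{1}{3}(n-h-8t)/t$. Corollary~\ref{kssz-cor} then packs all $t$ forests simultaneously: with that degree cap, removing all $t$ forests still leaves minimum degree at least $\frac{2}{3}$ of the host, so the KSS hypothesis holds automatically throughout and no vertex ever needs to be avoided. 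The hub sets $H_i$ are embedded on pairwise \emph{disjoint} vertex sets inside $K_h$ (possible since $\sum_i|H_i|=8t^2<h$), so hub saturation never collides across trees, and the slack $i-1$ is absorbed into the sizes of the leaf sets rather than into avoided vertices; the leaves are reattached by the Hall-type Claims~\ref{matching-claim} and \ref{matching2} and the greedy Claim~\ref{stars-claim}. Your instinct about conditions (1) and (2) --- converting dangerous high degree into a flexible supply of relocatable leaves --- agrees with the paper, but without the $n^{2/3}$ degree cap, the identical forest sizes enabling simultaneous packing, and the reserved disjoint-hub zone, the budgeting step (a)--(c) that you correctly identify as the core of the argument cannot be completed along the lines you propose.
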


Any tree with maximum degree at least $2n^{2/3}$ must satisfy (1) or (2) from Theorem~\ref{main1} thus we have the following corollary.

\begin{cor}\label{main1-cor}
Let $n$ be sufficiently large and let $t=\frac{1}{4}n^{1/3}$.
If $T_1,T_2,\dots, T_t$ are trees such that $|T_i| = n-i+1$ and
$\Delta(T_i) \geq 2n^{2/3}$ for every $i$, then $T_1,T_2,\dots,T_t$ pack into
$K_n$.
\end{cor}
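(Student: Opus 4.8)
The plan is to reduce the corollary directly to Theorem~\ref{main1} by showing that every tree $T$ on at most $n$ vertices with $\Delta(T) \geq 2n^{2/3}$ automatically satisfies condition (1) or condition (2) in the statement of that theorem. Since each $T_i$ has $|T_i| = n-i+1 \leq n$ vertices and $\Delta(T_i) \geq 2n^{2/3}$ by hypothesis, establishing this dichotomy immediately lets us invoke Theorem~\ref{main1} to conclude that $T_1,T_2,\dots,T_t$ pack into $K_n$. The corollary is thus entirely a structural statement about a single tree, with no packing argument of its own.

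First I would fix a vertex $v$ of $T$ with $\deg(v) = \Delta(T) \geq 2n^{2/3}$ and split its neighborhood into the set $L$ of leaf-neighbors and the set $N$ of non-leaf-neighbors, so that $|L| + |N| = \deg(v)$. A simple pigeonhole step then forces one of two cases. If $|L| \geq n^{2/3}$, then the single-vertex set $\{v\}$ (of size $1 \leq n^{1/3}$) already has a neighborhood containing at least $n^{2/3}$ leaves, so condition (1) holds. Otherwise $|L| < n^{2/3}$, and since $\deg(v) \geq 2n^{2/3}$ we obtain $|N| \geq n^{2/3}$.

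In the second case I would root $T$ at $v$ and, for each non-leaf neighbor $u \in N$, select a single leaf $\ell_u$ of $T$ lying in the subtree hanging below $u$; such a leaf exists because every finite tree has a leaf and $u$ itself is not a leaf. The point that needs checking is that $\{\ell_u : u \in N\}$ is an independent set of leaves in the sense defined in the paper, i.e.\ that the (unique) neighbors of these leaves are pairwise disjoint. This follows because the subtrees hanging below distinct neighbors of $v$ are vertex-disjoint: for $u \neq u'$ the parents of $\ell_u$ and $\ell_{u'}$ lie in disjoint subtrees and are therefore distinct, and moreover each such parent lies strictly inside a subtree and so is never $v$ itself. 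Hence we have found at least $n^{2/3}$ independent leaves and condition (2) holds.

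The argument is elementary and presents no analytic obstacle; the only step demanding a little care is the independence verification in the second case, where one must confirm that choosing exactly one leaf per subtree really produces pairwise-disjoint neighbors (equivalently, a matching of leaf edges) and that the non-leaf-neighbor count does not secretly drop below $n^{2/3}$. Once this dichotomy between (1) and (2) is in hand, the corollary follows at once by applying Theorem~\ref{main1} to the family $T_1,T_2,\dots,T_t$.
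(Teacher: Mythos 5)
Your proposal is correct and is exactly the route the paper takes: the paper derives the corollary from Theorem~\ref{main1} via the one-line observation that any tree with $\Delta(T_i)\geq 2n^{2/3}$ satisfies condition (1) or (2), and your argument simply supplies the (correct) details of that dichotomy, including the independence check via one leaf per subtree of the high-degree vertex. No gap; this matches the paper's intended proof.
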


If we let the complete graph have one more vertex than allowed by
Conjecture \ref{theconjecture}, then we can pack many trees without conditions on their structure.

\begin{thm}\label{main2}
Let $n$ be sufficiently large and $t=\frac{1}{10}n^{1/4}$.
If $T_1,T_2,\dots, T_t$ are trees such that $|T_i| = n-i+1$ for every $i$,
then $T_1,T_2,\dots,T_t$ pack into $K_{n+1}$.
\end{thm}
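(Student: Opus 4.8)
The plan is to embed the trees one at a time in decreasing order of size, $T_1,T_2,\dots,T_t$, giving the edges used by $T_i$ the color $i$ and always embedding into the graph $G_i$ of currently uncolored edges. As in Theorem~\ref{main1}, the engine of each embedding is the Koml\'os--S\'ark\"ozy--Szemer\'edi spanning tree theorem \cite{kssz-paper}, which embeds any tree $T$ that spans an $m$-vertex host of minimum degree $(1/2+\epsilon)m$, provided $\Delta(T)=O(m/\log m)$. Two obstructions stand in the way of applying it: vertices of $T_i$ of large degree, which KSS cannot absorb, and host vertices that have collected so many colored edges that $G_i$ is no longer dense. I would fight both by maintaining the invariant that after $T_i$ is placed at most $i$ host vertices are \emph{saturated} (carry more than $(1/2-\epsilon)n$ colored edges), while every other host stays far below that threshold.

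First I would classify the vertices of each tree by degree. By the degree-sum bound, $T_i$ has at most two \emph{huge} vertices (degree exceeding $(1/2-\epsilon)n$) and fewer than $2n^{1/3}$ vertices of degree at least $n^{2/3}$. By Corollary~\ref{main1-cor}, if $T_i$ has a vertex of degree at least $2n^{2/3}$ then $T_i$ satisfies condition (1) or (2) of Theorem~\ref{main1}, so it comes equipped with either $n^{1/3}$ vertices whose neighborhoods contain $n^{2/3}$ leaves or with $n^{2/3}$ independent leaves; otherwise $\Delta(T_i)<2n^{2/3}\ll n/\log n$ and $T_i$ already lies in the range of KSS.

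To embed a single tree I would place its few large-degree vertices by hand, sending each to a host that still has at least $(1/2+\epsilon)n$ uncolored edges (a fresh host, furnished by the exclusion budget). The abundant leaves guaranteed by condition (1) or (2) are then used to complete and stitch the embedding: their leaf-edges are routed to distinct, lightly loaded hosts so that each such host receives only a single new colored edge, which both keeps the load balanced and supplies the flexibility to attach the bulk of $T_i$ to the hand-placed vertices. The remaining part of $T_i$ has bounded maximum degree, so it can be finished by a single application of KSS inside the dense uncolored subgraph induced on the non-saturated hosts; for the low-degree trees the whole embedding collapses to one such application. A total edge count of at most $tn=\tfrac{1}{10}n^{5/4}$ makes it feasible to distribute the leaf-edges so that no host outside the saturated set ever approaches the $(1/2-\epsilon)n$ threshold.

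The hard part will be preserving the invariant ``at most $i$ saturated hosts after step $i$.'' A single tree can force up to two genuinely saturated hosts, since its huge vertices cannot be avoided, so holding the net increase to one saturated host per step requires both reusing saturated hosts---stacking huge vertices of degrees $d,d'$ with $d+d'\le n$ on a common host---and having, at every step, one guaranteed spare host to absorb the overflow. This spare is exactly what the extra vertex of $K_{n+1}$ buys: embedding $T_i$ (which has $n-i+1$ vertices) leaves $n+1-(n-i+1)=i$ unused hosts rather than the $i-1$ available in $K_n$, so the per-step exclusion budget rises by one. That single extra unit is precisely the slack that is missing in $K_n$ when the largest tree is a star, whose center must saturate a host that then cannot be excluded, and it is why no structural hypothesis on the trees is needed here. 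I expect the main technical burden to be verifying this invariant simultaneously with keeping all non-saturated hosts dense and guaranteeing that the hand-placement of large-degree vertices never collides with the saturated set; the small constant in $t=\tfrac{1}{10}n^{1/4}$ should be exactly what is required to close this bookkeeping.
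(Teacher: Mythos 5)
Your sequential plan (embed the trees one at a time, excluding a budget of ``saturated'' hosts, with KSS as the engine) is genuinely different from the paper's architecture, but it has two gaps that I do not see how to close as written. The first is the attachment problem. Theorem~\ref{kssz} is a pure existence statement: it gives no control over which host vertex receives which tree vertex. After you hand-place a vertex $x$ of degree $d \geq 2n^{2/3}$, the rest of $T_i$ is a forest whose components must each attach a \emph{specified root} to the host of $x$ through an uncolored edge, and $x$ may have up to $d - n^{2/3}$ non-leaf neighbors --- conditions (1) and (2) guarantee many leaves somewhere in the tree, not that all neighbors of the big vertices are leaves. A black-box application of KSS to the remainder cannot place those roots adjacent to $x$'s host, and post-hoc leaf-edge matchings only stitch degree-one vertices. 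This is exactly the difficulty the paper's decomposition is built around: it reserves a clique $K_h$, embeds the hub sets $H_i$ (containing \emph{all} vertices of degree above $n^{3/4}$) vertex-disjointly into $K_h$, packs all the bounded-degree forests $F_i$ simultaneously into $K_{n-h-25t}$ by Corollary~\ref{kssz-cor}, and only then colors the edges $T_i[H_i,F_i]$, which are available precisely because the $H_i$ are pairwise disjoint in $K_h$. Without a substitute for this reservation trick, your ``stitching'' step fails for any tree having a high-degree vertex with many non-leaf neighbors.

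Second, the invariant ``at most $i$ saturated hosts after step $i$'' is false as stated. Take $T_1$ to be a double star with two centers of degree about $n/2$: both exceed the $(1/2-\epsilon)n$ threshold, and since they belong to the same tree they must occupy distinct hosts, so two hosts are saturated after step one. The repair you suggest --- stacking huge vertices of degrees $d,d'$ with $d+d'\leq n$ on a common host --- is only available across \emph{different} trees, and even then it forces the second tree's roughly $n/2$ neighbors onto exactly the uncolored edges at that host, which is again a rooting constraint that KSS cannot honor. The global edge count $\leq tn$ only bounds the number of saturated hosts by roughly $4t$, which exceeds your exclusion budget $i$ at almost every step, so the invariant would truly have to be established step by step, and you concede this is not done. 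Note also that the paper's use of the extra vertex is much more local than your per-step spare-host heuristic: in the paper, the trees are split into type~I, type~II and path-like (the last handled via long bare paths and spaced degree-$2$ vertices, Claim~\ref{v_and_path}, then Claim~\ref{stars-paths}), and the single vertex $K_1$ is used only in Case~B, to host one designated leaf $v_i$ of a star, compensating for the one unavoidable foreign-colored edge at the star's center $x_i$ created by the stars-and-paths packing inside the $K_{3t}$. So while your intuition that stars are the obstruction in $K_n$ matches Proposition~\ref{main2-cor}, the mechanism you give for why $K_{n+1}$ suffices remains heuristic, and the two concrete gaps above (prescribed attachment under KSS, and the saturation bookkeeping) are each fatal to the proof as proposed.
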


Eliminating a single case from the proof of Theorem~\ref{main2} gives the following proposition.

\begin{prop}\label{main2-cor}
Let $n$ be sufficiently large and $t=\frac{1}{10}n^{1/4}$.
If $T_1,T_2,\dots, T_t$ are trees such that $|T_i| = n-i+1$ and $T_i$ is not a star for each $i$,
then $T_1,T_2,\dots,T_t$ pack into $K_{n}$.
\end{prop}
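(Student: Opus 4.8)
The plan is to re-run the proof of Theorem~\ref{main2} almost verbatim, keeping careful track of the single place where the extra vertex of $K_{n+1}$ is actually spent, and to observe that that place is reached only when one of the trees is a star. Recall the shape of that argument: the trees are embedded one at a time, in the order $T_1,T_2,\dots,T_t$, by colouring the edges of the host complete graph so that the edges of colour $i$ contain a copy of $T_i$, all the while maintaining that the uncoloured graph stays dense and quasirandom enough that the spanning-tree embedding theorem of Koml\'os, S\'ark\"ozy and Szemer\'edi~\cite{kssz-paper}, together with a greedy/Hall-type attachment of leaves, can embed the next tree. This is feasible because the total number of edges ever coloured is at most $\sum_{i\le t}(n-i)=O(n^{5/4})$, negligible against $\binom n2$; the only genuinely local danger is exhausting the edges at a single vertex, so the crux of the bookkeeping is a placement rule that keeps some host vertices clean.

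The decisive observation is that a star on $m$ vertices is the \emph{unique} tree of maximum degree $m-1$; every non-star tree on $m$ vertices has maximum degree at most $m-2$. When we embed $T_i$ (on $m=n-i+1$ vertices), the image of its maximum-degree vertex must be a host vertex retaining at least $\Delta(T_i)$ uncoloured incident edges. If $T_i$ is a star this requires a host vertex of degree $n-i$ in uncoloured edges, whereas for every non-star $T_i$ it requires only $n-i-1$. In $K_{n+1}$ each vertex has $n$ incident edges, so the star demand becomes ``used-degree at most $i$''; in $K_n$ each vertex has $n-1$ incident edges, so the non-star demand \emph{also} becomes ``used-degree at most $i$''. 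Thus a single invariant --- that the current colouring always leaves a host vertex of used-degree at most $i$, which is maintained by spreading the few high-degree images of $T_1,\dots,T_{i-1}$ onto distinct host vertices (this is cheap because $t=\tfrac1{10}n^{1/4}$ is tiny) --- simultaneously accommodates stars in $K_{n+1}$ and all non-star trees in $K_n$. The extra incident edge per vertex supplied by $K_{n+1}$ is exactly the one unit by which a star outruns every other tree, and it is consumed only in the star subcase.

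Under the hypothesis of the proposition no $T_i$ is a star, so this subcase is vacuous and the extra unit is never needed. One may therefore delete the star branch and carry out the whole construction inside $K_n$: embed the non-star spanning tree $T_1$ onto all $n$ vertices via the many-leaves-or-path decomposition underlying Theorem~\ref{main1} (either embed a bounded-degree core and hang the abundant leaves, or, when $T_1$ is path-like, embed it by the Koml\'os--S\'ark\"ozy--Szemer\'edi method), and embed each subsequent non-star $T_i$ by the appropriate one of these two subcases while preserving the clean-vertex invariant. The main obstacle --- and where essentially all of the real work lies --- is verifying that every step other than the deleted star branch genuinely fits the tighter budget of $K_n$: that the core-plus-leaves and path-like embeddings each use only $n$ vertices and never force any host vertex past its $n-1$ incident edges, and that the minimum-degree and quasirandomness hypotheses feeding the embedding theorem survive when the host has $n$ rather than $n+1$ vertices. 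Since, as computed above, the clean-vertex invariant that drives these steps requires no more of $K_n$ than it did of $K_{n+1}$ once stars are excluded, these checks go through and the packing into $K_n$ is obtained.
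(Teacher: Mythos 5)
Your proposal is correct and is essentially the paper's own proof: the paper establishes Proposition~\ref{main2-cor} by rerunning the proof of Theorem~\ref{main2} verbatim and observing that the extra vertex $K_1$ is consumed only in \textbf{Case~B}, the branch handling stars, which is vacuous under the hypothesis, so the entire construction fits inside $K_n$. Your narrative of the internals of Theorem~\ref{main2} is looser than the actual argument (the paper does not sequentially embed trees while maintaining quasirandomness, but rather applies Corollary~\ref{kssz-cor} once to the trimmed forests $F_i$ and completes via Hall-type matching claims), yet the decisive observation --- that the extra vertex is spent exactly on the one unit by which a star's center, which cannot reroute a leaf through a second vertex $u_i$, outruns every non-star --- matches the paper's remark inside the proof of Theorem~\ref{main2}.
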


The remainder of the paper is is organized as follows. In Section~\ref{prelim} we will prove some preliminary claims that will help with the proofs of both theorems. Sections~\ref{proof1} and \ref{proof2} concern the proofs of Theorems~\ref{main1} and \ref{main2}.


\section{Preliminaries}\label{prelim}

Before proving Theorems \ref{main1} and \ref{main2}, we will
need some preparation. Koml\'os, S\'ark\"ozy and Szemer\'edi \cite{kssz-paper}
proved the following.

\begin{thm}\label{kssz}
Let $ \delta > 0$ be given. Then there exist constants $c$ and $n_0$
with the following properties. If $ n \geq n_0 $, $T$ is a tree on $n$ vertices with
$ \Delta (T) \leq { cn /\log n}$, and $G$ is a graph on $n$ vertices with
$ { \delta (G)} \geq (\frac{1}{2}+ { \delta })n $, then $T$ is a
subgraph of $G$.
\end{thm}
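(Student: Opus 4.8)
The plan is to run the proof of Theorem~\ref{main2} essentially verbatim, but with $K_n$ in place of $K_{n+1}$, and to observe that the only place where the extra vertex was genuinely needed is the case in which some $T_i$ is a star. Concretely, I would pack $T_1,T_2,\dots,T_t$ one at a time in order of decreasing size, colouring the edges used by $T_i$ with colour $i$ and tracking the graph $G_i$ of still-uncolored edges. The governing invariant is that $G_i$ keeps very large minimum degree: since $t=\frac{1}{10}n^{1/4}$ and $|T_i|=n-i+1$, the trees together contain at most $\sum_{i=1}^{t}(n-i)\le tn=\frac{1}{10}n^{5/4}$ edges, so as long as each embedding step removes only $O(n^{3/4})$ edges at any single vertex, every $G_i$ satisfies $\delta(G_i)\ge n-o(n)\ge(\tfrac12+\delta)(n-i+1)$. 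This keeps the hypotheses of the Koml\'os--S\'ark\"ozy--Szemer\'edi theorem (Theorem~\ref{kssz}) available at every step, after restricting to a suitable vertex set of size $|T_i|$.

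The embedding of a single $T_i$ splits according to its number of leaves. Since in any tree the number of leaves is at least the maximum degree, if $T_i$ has few leaves then $\Delta(T_i)$ is small and Theorem~\ref{kssz} applies directly to $T_i$ inside $G_{i-1}$ restricted to $|T_i|$ vertices of high degree. If $T_i$ has many leaves, then it may contain high-degree vertices; here one first deletes a large independent set of leaves to obtain a skeleton $S_i$ of bounded maximum degree, embeds $S_i$ by Theorem~\ref{kssz} (handling its few remaining high-degree vertices by hand) onto well-connected host vertices, and then reattaches the deleted leaves one by one, using that each leaf has many admissible uncolored host edges. The degenerate endpoint of the ``many leaves'' case is the star: there $S_i$ is a single vertex and every vertex of $T_i$ is a leaf, so the leaf-attachment step must find one host vertex with $n-i$ uncolored edges reaching $n-i$ distinct vertices. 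It is exactly this step that in Theorem~\ref{main2} consumes the slack afforded by having $n+1>|T_i|$ host vertices.

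To obtain the proposition I would simply delete the star case. Under the hypothesis that no $T_i$ is a star, every tree either has few leaves (handled by Theorem~\ref{kssz} directly) or has many leaves but a nontrivial skeleton $S_i$ with at least two non-leaf vertices; in the latter situation the leaf-attachment never asks a single host vertex to absorb almost all of $T_i$, so no spare vertex is required and the whole argument goes through with the $n$ vertices of $K_n$. The largest tree $T_1$ is spanning, but as it is embedded into the complete graph $G_0=K_n$ this step is trivial whatever its structure. The main point to check carefully is that, with the buffer vertex removed, the minimum-degree invariant still survives: one must reverify that placing the high-degree vertices of each skeleton on fresh host vertices, together with the leaf attachments, degrades each vertex degree by only $O(n^{3/4})$ over all $t$ steps, so that $\delta(G_i)\ge(\tfrac12+\delta)(n-i+1)$ persists and Theorem~\ref{kssz} remains applicable down to the last tree. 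This bookkeeping, rather than any new idea, is the only real obstacle.
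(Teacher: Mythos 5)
Your proposal does not prove the statement at hand. The statement is Theorem~\ref{kssz}, the spanning-tree embedding theorem of Koml\'os, S\'ark\"ozy and Szemer\'edi, which the paper itself does not prove: it is quoted as an external black box from \cite{kssz-paper}, where its proof rests on heavy machinery (the Szemer\'edi regularity lemma together with the blow-up lemma and an absorbing-type embedding of the tree level by level). What you have written instead is a sketch of a proof of Proposition~\ref{main2-cor} (packing non-star trees into $K_n$ by rerunning the proof of Theorem~\ref{main2} and deleting the star case). Worse, as an argument for Theorem~\ref{kssz} it is circular: you invoke Theorem~\ref{kssz} itself at every embedding step (``This keeps the hypotheses of the Koml\'os--S\'ark\"ozy--Szemer\'edi theorem available at every step''), so nothing in your text could be assembled into a proof of the theorem you were asked to prove. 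No amount of degree bookkeeping or leaf-reattachment of the kind you describe substitutes for the regularity/blow-up machinery; embedding a \emph{spanning} tree of maximum degree up to $cn/\log n$ into a graph of minimum degree $(\frac12+\delta)n$ is precisely the hard content of the theorem, and it has no short elementary proof.

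Separately, even read as a proof of Proposition~\ref{main2-cor}, your route diverges from and is weaker than the paper's. The paper derives the proposition in one line: in the proof of Theorem~\ref{main2} the extra vertex $K_1$ is used only in {\bf Case~B}, which occurs only for stars, so absent stars the entire packing already lives in $K_n$. Your alternative scheme --- embedding the trees sequentially while maintaining $\delta(G_i)\geq(\frac12+\delta)(n-i+1)$ and applying Theorem~\ref{kssz} to a restricted vertex set of size $|T_i|$ --- has an unaddressed difficulty: Theorem~\ref{kssz} requires the host graph to have exactly $|T_i|$ vertices with minimum degree $(\frac12+\delta)|T_i|$ \emph{inside that vertex set}, and a global minimum-degree bound on $G_i$ does not guarantee that a suitable $|T_i|$-subset exists after adversarially placed colored edges accumulate; the paper avoids this entirely by packing all the pruned low-degree forests $F_i$ simultaneously into a fixed clique $K_{n-h-25t}$ via Corollary~\ref{kssz-cor}, and handling the high-degree vertices and removed leaves by explicit matching and star-forest claims (Claims~\ref{stars-claim}, \ref{matching-claim}, \ref{matching2}).
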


The following corollary is an immediate consequence of Theorem~\ref{kssz} (we fix $\delta = 1/6$ here).

\begin{cor}\label{kssz-cor}
Let $n$ be sufficiently large and let $t = t(n)$ be such that $t(n)/\log n \rightarrow \infty$ as $n \rightarrow \infty$.
If $T_1,T_2,\dots, T_t$ are forests of order at most $n$ and $\Delta(T_i) < \frac{1}{3} n/t$, then
$T_1,T_2,\dots, T_t$ pack into $K_n$.
\end{cor}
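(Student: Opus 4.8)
The plan is to pack the forests one at a time, maintaining at each step an ``available'' subgraph $G_i \subseteq K_n$ consisting of the edges not yet used, and to place the $i$-th forest using the spanning tree embedding theorem (Theorem~\ref{kssz}) with the fixed choice $\delta = 1/6$. Initially $G_1 = K_n$, so $\delta(G_1) = n-1$. Having packed $T_1, \dots, T_{i-1}$ edge-disjointly, I want to find a copy of $T_i$ inside $G_i$ that avoids all previously used edges.

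The obstacle is that Theorem~\ref{kssz} embeds spanning trees, whereas $T_i$ is merely a forest on some $m_i \le n$ vertices. First I would extend $T_i$ to a spanning tree $\hat T_i$ of order $n$: add the $n - m_i$ missing vertices and join all components of the resulting forest into a single tree by threading a path through one representative vertex of each component, always choosing an isolated vertex or a leaf as the representative. This adds at most $2$ to the degree of each representative, so $\Delta(\hat T_i) \le \max\{\Delta(T_i), 3\}$. Since $t/\log n \to \infty$, for $n$ large we have $\Delta(T_i) < \frac13 n/t \le cn/\log n$ and also $3 \le cn/\log n$, where $c = c(1/6)$ is the constant from Theorem~\ref{kssz}; hence $\Delta(\hat T_i) \le cn/\log n$. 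This is the only place the hypothesis $t/\log n \to \infty$ is used.

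Provided $\delta(G_i) \ge (\frac12 + \frac16)n = \frac23 n$, Theorem~\ref{kssz} then yields a copy of $\hat T_i$ inside $G_i$. The crucial point — and the main thing to get right — is that I color (remove) only the edges of this copy corresponding to $T_i \subseteq \hat T_i$, leaving the ``scaffolding'' edges of $\hat T_i \setminus T_i$ uncolored and available for later steps. Consequently each vertex loses at most $\Delta(T_i)$ edges when passing from $G_i$ to $G_{i+1}$, rather than $\Delta(\hat T_i)$. This distinction is essential: extending to a spanning tree is forced by Theorem~\ref{kssz}, but charging the full spanning-tree degree would wreck the accounting precisely when $t$ is large and the forests are sparse.

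It remains to check that the minimum-degree condition survives all $t$ steps. By the charging above, $\delta(G_i) \ge (n-1) - \sum_{j<i} \Delta(T_j) > (n-1) - t \cdot \frac{n}{3t} = \frac23 n - 1$, and suppressing integer parts (equivalently, taking $n$ large) this is at least $(\frac12 + \frac16)n$, exactly the hypothesis of Theorem~\ref{kssz}. This pairing is why the coefficient $\frac13$ in the degree bound goes with $\delta = 1/6$. Forests with no edges contribute nothing and may be skipped. Running the procedure for $i = 1, \dots, t$ produces the desired edge-disjoint packing into $K_n$.
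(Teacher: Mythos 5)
Your proof is correct and is exactly the argument the paper intends: the paper states Corollary~\ref{kssz-cor} without proof, as an ``immediate consequence'' of Theorem~\ref{kssz} with $\delta=1/6$, and your greedy scheme --- extend each forest to a spanning tree of maximum degree at most $\max\{\Delta(T_i),3\}$ via leaf/isolated-vertex representatives, embed it in the graph of uncolored edges, and color only the edges of $T_i$ itself --- is the standard way of making that consequence immediate, with the hypothesis $t/\log n\to\infty$ used precisely where you use it. The only nitpick is the off-by-one in $\delta(G_i)\ge \frac{2}{3}n-1$ against the threshold $\left(\frac12+\frac16\right)n$, which is absorbed by integrality (each $\Delta(T_j)\le\lfloor n/(3t)\rfloor$, so the loss over $i-1\le t-1$ steps is at most $n/3-n/(3t)$, and $n/(3t)\ge 1$ whenever any forest has an edge) and in any case falls under the paper's blanket convention of suppressing integer parts.
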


Corollary~\ref{kssz-cor} will allow us to pack into $K_n$ the forests
that remain after removing vertices of ``high'' degree from each tree
$T_i$. We will also need the following easy claims.
The first follows from an application of the greedy algorithm.

\begin{claim}\label{stars-claim}
Fix $k,a,b$ such that $k < a < b$. Let $G$ be a bipartite graph with classes $A = \{v_1,v_2,\dots,v_a\}$ and $B$ such that $|B|=b$ and the degree of each vertex in $A$ is at least $b-k$.
Then for any non-negative integers $c_1,c_2,\dots,c_a$ such that
$\sum_{i=1}^a c_i\leq b-k$ we can pack a star forest into $G$ such that for all $i$ each vertex $a_i$ is the center of a star with exactly $c_i$ leaves.
\end{claim}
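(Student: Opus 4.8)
The plan is to build the star forest greedily, processing the centers $v_1, v_2, \dots, v_a$ one at a time in this order. I would maintain the set $U \subseteq B$ of leaves already assigned to earlier stars, initially empty, and at step $i$ choose $c_i$ neighbors of $v_i$ lying in $B \setminus U$ to serve as the leaves of the star centered at $v_i$, then add these $c_i$ vertices to $U$. Since distinct stars then share no leaf and all centers lie in $A$, the resulting subgraph is automatically a star forest, so the only thing to verify is that step $i$ can always be completed.

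The heart of the argument is a counting inequality. When we reach $v_i$, the set $U$ of already-used leaves has size exactly $\sum_{j<i} c_j$. Because $v_i$ fails to be adjacent to at most $k$ vertices of $B$, it has at least $b-k$ neighbors in $B$, and so the number of its neighbors still available in $B \setminus U$ is at least
\[
(b-k) - \sum_{j<i} c_j.
\]
The hypothesis $\sum_{j=1}^a c_j \le b-k$ forces every partial sum to satisfy $\sum_{j \le i} c_j \le b-k$, which rearranges to $(b-k) - \sum_{j<i} c_j \ge c_i$. Hence at least $c_i$ admissible leaves remain, and I may pick any $c_i$ of them to form the star at $v_i$.

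Iterating over all $i$ yields the required star forest; formally this is an induction on the number of processed centers with the displayed inequality as the induction step. There is no real obstacle beyond the bookkeeping: the one point needing care is to bound the available neighbors correctly, subtracting the at most $\sum_{j<i} c_j$ used leaves from the guaranteed $b-k$ neighbors of $v_i$. The conditions $k < a < b$ serve only to keep these quantities positive and are not otherwise used.
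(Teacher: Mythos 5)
Your proof is correct and is precisely the greedy argument the paper invokes: the paper gives no written proof, remarking only that the claim ``follows from an application of the greedy algorithm,'' and your counting inequality $(b-k)-\sum_{j<i}c_j\geq c_i$ is exactly the bookkeeping that makes that remark rigorous. Nothing further is needed.
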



\begin{claim}\label{matching-claim}
Fix $a,b,k$ such that $4k^2 < a < b-k$. Let $G$ be a graph resulting from the removal of $k$ forests from a complete bipartite graph $K_{a,b}$. Then $G$ contains a matching of size $a-k$.
\end{claim}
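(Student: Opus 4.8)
The plan is to recast the claim as a deficiency bound for the bipartite graph $G$ and then apply the defect version of Hall's theorem. Write $A$ for the vertex class of size $a$ and $B$ for the class of size $b$, so that $G$ is bipartite with parts $A,B$ and is obtained from $K_{a,b}$ by deleting the edges of $k$ forests $F_1,\dots,F_k$. By the defect version of Hall's theorem (equivalently König's theorem), $G$ contains a matching saturating all but $\max_{S\subseteq A}\big(|S|-|N_G(S)|\big)$ vertices of $A$; hence it suffices to show that this deficiency is at most $k$, i.e.\ that every $S\subseteq A$ satisfies $|N_G(S)|\ge|S|-k$.

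Suppose for contradiction that some $S\subseteq A$ has $|N_G(S)|\le|S|-(k+1)$. Put $s=|S|$ and $T=B\setminus N_G(S)$, so that no edge of $G$ joins $S$ to $T$ and $|T|=b-|N_G(S)|\ge b-s+k+1$. The key observation is an edge count: in $K_{a,b}$ the pair $(S,T)$ spans exactly $s|T|$ edges, and since none of these survive in $G$, all of them lie in $F_1\cup\cdots\cup F_k$. Each $F_j$, restricted to the vertex set $S\cup T$, is again a forest and so contributes at most $|S|+|T|-1$ edges between $S$ and $T$; summing over $j$ gives $s|T|\le k(s+|T|-1)$.

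It then remains to show this inequality is incompatible with the hypotheses $4k^2<a$, $a+k<b$, and $k+1\le s\le a$. Rearranging, $(s-k)|T|\le k(s-1)$, and since $s\ge k+1$ this gives $|T|\le k(s-1)/(s-k)$. Combining with the lower bound $|T|\ge b-s+k+1$ and substituting $u=s-k\in[1,a-k]$, the assumed inequality becomes $u\,(b+1-k-u)\le k(k-1)$. The left-hand side is a downward parabola in $u$ that is positive throughout $[1,a-k]$, so its minimum over this interval occurs at an endpoint; at $u=1$ it equals $b-k>a>4k^2$, and at $u=a-k$ it is at least $(a-k)(k+2)$, and once $a>4k^2$ both of these exceed $k(k-1)$. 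This contradicts the displayed inequality, so no such $S$ exists, and the matching of size $a-k$ follows.

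I expect the only genuinely delicate point to be this last paragraph. The upper bound $k(s-1)/(s-k)$ on $|T|$ degrades from roughly $k^2$ when $s$ is close to $k$ down to roughly $k$ when $s$ is large, so one cannot simply substitute a single convenient value of $s$; the whole range $k+1\le s\le a$ must be handled at once. Passing to the variable $u=s-k$ and using concavity (the parabola attains its interval minimum at an endpoint) is what lets the hypothesis $a>4k^2$ do its work. Everything else is routine bookkeeping with Hall's theorem together with the elementary fact that the induced subgraph of a forest on $m$ vertices has at most $m-1$ edges.
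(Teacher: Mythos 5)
Your proof is correct and takes essentially the same route as the paper's: the defect version of Hall's theorem combined with the observation that every non-edge between $S$ and $T=B\setminus N_G(S)$ lies in the $k$ removed forests, yielding the count $s|T|\le k(s+|T|-1)$. The only difference is in the concluding algebra --- the paper splits into two cases via an average-degree bound ($|S|<2k$ or $|T|<2k$) and solves for $b$, while you handle the entire range $k+1\le s\le a$ at once by observing that $u(b+1-k-u)$ is concave in $u=s-k$ and checking the endpoints, a slightly cleaner finish to the same argument.
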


\begin{proof}
Let $A,B$ be the two vertex classes of $K_{a,b}$ such that $|A|=a$ and $|B|=b$.
By the defective version of Hall's theorem, if there is no matching in $G$ of size $a-k$, then there exists a nonempty set $S \subset A$ with neighborhood $N(S) \subset B$ such that $|S| -k > |N(S)|$.  
Clearly there is no edge between $S$ and $B- N(S)$. These non-edges form a subgraph of the union of the forest(s) removed from $K_{a,b}$. Such a subgraph has average degree less than $2k$. 
Thus either $|S| < 2k$ or $|B|-|N(S)|<2k$.

First assume $|S| < 2k$.
Immediately we have $|N(S)| < |S|-k < k$.
Observe that there are $|S|(|B|-|N(S)|)$ non-edges between $S$ and $B- N(S)$. Furthermore, these non-edges
form a subgraph of the forest(s) removed from $K_{a,b}$, so there are at most $k(|S|+|B|-|N(S)|-1)$ such non-edges. 
Therefore $|S|(|B|-|N(S)|) \leq k(|S|+|B|-|N(S)|-1)$. 
Solving for $|B|$ gives $|B| \leq \frac{k(|S|-|N(S)|-1) + |S||N(S)|}{|S|-k} \leq 4k^2$, a contradiction.

Now assume $|B|-|N(S)|<2k$. This gives $|B|-2k < |N(S)| < |S|-k$ and thus $|S| > |B|-k > a$, a contradiction.
\end{proof}


\begin{claim}\label{matching2}
Fix $a$ and $k$ such that $a \geq 2k$ and let $G$ be a graph resulting from the removal of $k$ matchings from a $K_{a,a}$. Then $G$ has a perfect matching. 
\end{claim}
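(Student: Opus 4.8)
The plan is to use the minimum degree of $G$ together with Hall's theorem. Let $A$ and $B$ be the two classes of $K_{a,a}$, both of size $a$, and set $d = a - k$. Since a matching is incident to each vertex in at most one edge, removing the $k$ matchings decreases the degree of each vertex by at most $k$; hence every vertex of $G$ has degree at least $d$. The hypothesis $a \geq 2k$ gives $d = a - k \geq k$, a fact I will use below.

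To produce a perfect matching it suffices to verify Hall's condition for the class $A$, and I would do this by a case analysis on the size of an arbitrary nonempty set $S \subseteq A$. If $|S| \leq d$, then for any $v \in S$ we have $|N(S)| \geq \deg(v) \geq d \geq |S|$. If instead $|S| > d$, I claim that $N(S) = B$: a vertex $w \in B \setminus N(S)$ would have all of its neighbors in $A \setminus S$, so $\deg(w) \leq a - |S| < a - d = k$, contradicting $\deg(w) \geq d \geq k$. Hence $|N(S)| = a \geq |S|$ in this case as well, and Hall's condition holds for every $S \subseteq A$.

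Applying Hall's theorem then yields the desired perfect matching. The argument is elementary and I foresee no genuine obstacle; the only point requiring care is the bookkeeping that converts the hypothesis $a \geq 2k$ into the inequality $d \geq k$ powering the second case. Working with $d = a - k$ rather than with the threshold $a/2$ has the pleasant side effect of sidestepping any integer-part subtleties, since every quantity in the argument is an integer.
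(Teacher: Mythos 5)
Your proof is correct and rests on the same two facts as the paper's: Hall's theorem, together with the observation that removing $k$ matchings deletes at most $k$ edges at each vertex, so $\delta(G) \geq a-k \geq k$. The paper packages this as a proof by contradiction with a count of non-edges incident to a Hall violator $S$ (yielding $k \leq a-k \leq |N(S)| < |S| \leq k$), while you verify Hall's condition directly via the standard minimum-degree dichotomy on $|S|$, but the underlying argument is essentially identical.
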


\begin{proof}
If $G$ does not have a perfect matching, then by Hall's theorem there exists a nonempty set $S$ in one of the partite classes with neighborhood $N(S)$ such that
$|S| > |N(S)|$. First observe that a vertex is not in $N(S)$ if every edge between it and $S$ has been removed. Thus $|S| \leq k$. Furthermore,
we have that the number of non-edges with an endpoint in $S$ is at least $|S|(a-|N(S)|)$ and at most $|S|k$. 
Simplifying this inequality gives $k \leq a-k \leq |N(S)| < |S|$, a contradiction.
\end{proof}

As stated in the introduction, Conjecture \ref{theconjecture} is true when each tree is either a path or a star.
By examining the packing of any set of paths and stars $T_n,T_{n-1},\dots, T_2$ where $T_i$ has $i$ vertices into $K_n$ as given by Zaks and Liu \cite{ZaLi} it is
easy to see that the set of endpoints of the paths with at least $\frac{2}{3}n$ vertices are pairwise vertex-disjoint in $K_n$. This gives the following helpful claim.

\begin{claim}\label{stars-paths}
Let $T_1,T_2,\dots, T_k$ be trees such that each $T_i$ is either a path or a star and $|T_i| = 3k-i+1$. Then
there is a $k$-edge-coloring of $K_{3k}$ such that for each $i$, the edges of color $i$ span $T_i$ and 
the set of endpoints of the paths are pairwise vertex-disjoint in $K_{3k}$.
\end{claim}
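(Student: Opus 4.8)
The plan is to read the claim off from the observation about the Zaks--Liu packing recorded in the paragraph above, specialized to $n=3k$. Under this specialization the cutoff $\frac{2}{3}n$ becomes $2k$, while every tree in the claim has $|T_i|=3k-i+1\ge 2k+1>2k$ vertices. Thus all of $T_1,\dots,T_k$ sit strictly above the cutoff, so as soon as we realize them as color classes of an honest Zaks--Liu packing of $K_{3k}$, the observation will hand us the disjointness of the endpoints of those $T_i$ that are paths.

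First I would enlarge the given family to a full Zaks--Liu sequence. The trees $T_1,\dots,T_k$ are exactly the $k$ largest members of a sequence $T_1,\dots,T_{3k-1}$ with $|T_i|=3k-i+1$; I would take the missing trees $T_{k+1},\dots,T_{3k-1}$ to be stars on the appropriate number of vertices. (Any choice of paths or stars would do, but choosing stars guarantees that the only paths present are among $T_1,\dots,T_k$.) Since each tree is a path or a star, the Zaks--Liu theorem packs $T_1,\dots,T_{3k-1}$ into $K_{3k}$; as $\sum_{i=1}^{3k-1}(|T_i|-1)=\sum_{j=1}^{3k-1}j=\binom{3k}{2}$, this packing uses every edge and hence is a $(3k-1)$-edge-coloring of $K_{3k}$ whose $i$-th color class is a copy of $T_i$.

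Applying the observation to this coloring, the paths among $T_1,\dots,T_{3k-1}$ with at least $2k$ vertices have pairwise vertex-disjoint endpoints. Because the filled-in trees are stars, these long paths are precisely the members of $T_1,\dots,T_k$ that are paths, so their endpoints are pairwise disjoint in $K_{3k}$, which is the disjointness asserted in the claim. Finally, to pass from $3k-1$ colors down to $k$, I would recolor every edge currently colored $k+1,\dots,3k-1$ with color $1$. This leaves the copies of $T_1,\dots,T_k$ intact inside their color classes (color class $1$ now contains $T_1$ together with the surplus edges, and color classes $2,\dots,k$ are unchanged), and since no vertex is relocated, the endpoints remain disjoint. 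The result is the desired $k$-edge-coloring.

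The one place that really needs verification is not this reduction but the underlying observation, which is asserted rather than proved in the excerpt. I would establish it by inspecting the explicit inductive Zaks--Liu construction and tracking, for each path of length at least $\frac{2}{3}n$, the two vertices that serve as its ends, checking that these endpoint pairs never collide; this is the step I expect to be the main obstacle, since it requires understanding the internal bookkeeping of their algorithm rather than using it as a black box.
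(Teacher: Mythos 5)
Your proposal is correct and takes essentially the same route as the paper: the paper gives no standalone proof of this claim, deriving it directly from the asserted (unproved) observation that in the Zaks--Liu packing the endpoints of paths on at least $\frac{2}{3}n$ vertices are pairwise disjoint, exactly as you do by specializing $n=3k$. Your explicit bookkeeping (padding the family with stars to a full sequence, then merging the surplus colors into color $1$) just makes precise the reduction the paper leaves implicit, and you correctly identify that the only real content is verifying the endpoint observation inside the Zaks--Liu construction, which the paper likewise takes as ``easy to see.''
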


\section{Proof of Theorem~\ref{main1}}\label{proof1}

\begin{proof}
Throughout the proof we will assume that $n$ is sufficiently large for
the appropriate inequalities to hold. For ease of notation put $h =  \frac{3}{4}n^{2/3}$ and note that $8t=2n^{1/3}$, thus $8t^2 = \frac{1}{2}n^{2/3} = h - \frac{1}{4}n^{2/3}$.
We partition the vertex set of $K_n$ into three parts of order $n-h-8t$, $h$, and $8t$.
We will refer to the three parts as $K_{n-h-8t}$, $K_h$, and $K_{8t}$.

If $T_i$ satisfies condition (1) from the statement of Theorem~\ref{main1} then we call it \emph{type~I}, otherwise it satisfies condition (2) and is called \emph{type~II}. We will
partition the trees into parts corresponding to the partition of $K_n$.

\medskip

{\bf Partition of type~I trees:} 
For each tree $T_i$ of type~I, define $H_i$ to be the union of the set of vertices of degree greater than $n^{2/3}$ in $T_i$ 
and a set of at most $n^{1/3}$ vertices in $T_i$ such that the union of their neighborhoods contains at least $n^{2/3}$ leaves
and an arbitrary set of vertices such that $|H_i|= 8t = 2n^{1/3}$.

Partition $T_i$ into three parts: $H_i$, a set of $h - (i-1)$ leaves in the neighborhood of $H_i$ and 
the remaining $n-(i-1)-|H_i|-(h - (i-1)) = n-h-8t$ vertices, denoted by $F_i$.

\medskip

{\bf Partition of type~II trees:}
 For each tree $T_i$ of type~II, let $Y_i$ be a set of $t-1$ independent leaves and denote the neighborhood of $Y_i$ by $X_i$ (thus $|X_i| = t-1$).
Define $H_i$ to be the union of $X_i$ and the set of vertices of degree greater than $n^{2/3}$ in $T_i$
and an arbitrary set of vertices in $T_i- Y_i$ such that $|H_i| = 8t = 2n^{1/3}$.

Partition $T_i$ into four parts: $H_i$, $Y_i$, a set of $h - |Y_i| - (i-1)$ independent leaves that are not adjacent to $H_i$ (there are at most $|H_i|= 2n^{1/3} = 8t$ independent leaves adjacent to $H_i$), denoted by $L_i$, and the remaining $n-(i-1)-|H_i|-|Y_i|-(h - |Y_i| - (i-1)) = n - h -8t$ vertices, denoted by $F_i$.

\medskip

{\bf Packing into $K_{n}$:} 
To pack the trees into $K_n$ we first pack each $F_i$ into $K_{n-h-8t}$ and each $H_i$ into $K_h$ at the same time. Then we will embed the remaining parts of the trees one-by-one starting with type~I trees and finishing with type~II trees.
Recall that each
$F_i$ is a forest on $n-h-8t$ vertices and $\Delta(F_i) \leq n^{2/3} < \frac{4}{3}(n^{2/3}-\frac{3}{4}n^{1/3} - 2) = \frac{1}{3}(n-\frac{3}{4}n^{2/3} - 2n^{1/3})/t  = \frac{1}{3}(n-h-8t)/t$.
Thus by Corollary~\ref{kssz-cor} the forests $F_1,F_2,\dots, F_t$ pack into $K_{n-h-8t}$. In
other words there is an edge-coloring of $K_{n-h-8t}$ such that for each $i$ the edges
of color $i$ contain $F_i$.

We can pack each $H_i$ vertex-disjointly into $K_h$ as $\sum_{i=1}^t |H_i| = 8t^2 = \frac{1}{2}n^{2/3} < h = \frac{3}{4}n^{2/3}$. Because the sets $H_i$ are disjoint in $K_h$,
for each $i$ the edges $T_i[H_i,F_i]$ in $K_n$ can be colored with $i$.

We will now complete the packing of the trees starting with type~II followed by type~I.
We pack the trees of a given type from largest to smallest i.e.\ when completing the packing $T_i$ we may assume that all trees of the same type among $T_1,T_2,\dots, T_{i-1}$ have already been packed into $K_{n}$. 
A set of vertices in $K_{n}$ are called \emph{finished (in color $i$)} if each vertex has as many incident edges of color $i$ as its degree in $T_i$. Otherwise it is \emph{unfinished (in color $i$)}.

\medskip

{\bf Packing of type~II trees:}
For each type~II tree $T_i$ let $N_i$ be the set of neighbors of the independent leaves $L_i$, so $|N_i| = h - |Y_i| - (i-1)$. 
Observe that $N_i \subset F_i$, therefore the vertices in $N_i$ are already packed in $K_{n-h-8t}$.
To complete the packing of $T_i$ we need to find a matching of uncolored edges between $N_i \cup X_i$
 and a set of vertices with no incident edge of color $i$ such that $N_i \cup X_i$ is covered. These edges will represent the leaf edges between $N_i \cup X_i$ (which are already packed into $K_n$) and
$L_i \cup Y_i$ (which have not yet been packed into $K_n$).
Thus coloring the edges of this matching with $i$ will complete the packing of $T_i$.

Consider the bipartite graph between $N_i$ and $\cup_{j=1}^{i-1} H_j$ in $K_n$. Observe that at this point any color from $[i-1]$ may have been used on the edges of this bipartite graph and that the edges in a single color class form a forest. Thus we have removed at most $i-1$ forests from a complete bipartite graph with class sizes:
\begin{align*}
|\cup_{j=1}^{i-1} H_j|  = (i-1)8t \geq 4(i-1)^2  
\end{align*}
and
\begin{align*}
|N_i| &= h- |Y_i| - (i-1) > h - 2t  = \frac{3}{4}n^{2/3} - \frac{1}{2}n^{1/3}\\
      & > \frac{1}{2}n^{2/3} + (i-1) = 8t^2 + (i-1)\\
      & > (i-1)8t + (i-1) = |\cup_{j=1}^{i-1} H_j| + (i-1).
\end{align*}
So we may apply Claim~\ref{matching-claim} to the bipartite graph between $N_i$ and $\cup_{j=1}^{i-1} H_j$ with $i-1$ forests removed. 
Therefore there is a matching of uncolored edges between $N_i$ and $\cup_{j=1}^{i-1} H_j$ that misses only $i-1$ vertices of $\cup_{j=1}^{i-1} H_j$. 
Color this matching with $i$. The $i-1$ vertices missed by the matching will never have an incident edge of color $i$.

Now we consider the bipartite graph between the unfinished vertices of $N_i$ and $K_h-\cup_{j=1}^{i} H_j$ in $K_n$. Observe that at this point any color except $i$ may
have been used on the edges between these two classes and that the edges in a single color class form a forest.
Thus we have removed at most $t-1$ forests from a complete bipartite graph with class sizes:
$$
|K_h-\cup_{j=1}^{i} H_j|  = h - i8t > h - 8t^2 \\
			          = \frac{3}{4}n^{2/3} - \frac{1}{2}n^{2/3}\\
				  = \frac{1}{4}n^{2/3} = 4t^2 > 4(t-1)^2
$$
and the number of unfinished vertices in $N_i$ i.e.\ 
\begin{align*}
|N_i| - |\cup_{j=1}^{i-1} H_j| &=  h- |Y_i| - (i-1) - (i-1)8t \\
			       &= h - (t-1) - (i-1) - i8t + 8t\\
			       &> h - i8t + (t-1) = |K_h-\cup_{j=1}^{i} H_j| + (t-1). 
\end{align*}
So we may apply Claim~\ref{matching-claim} to the bipartite graph between the unfinished vertices of $N_i$ and $K_h-\cup_{j=1}^{i} H_j$ with $t-1$ forests removed.
Therefore there is a matching of uncolored edges between the unfinished vertices of $N_i$ and $K_h-\cup_{j=1}^{i} H_j$ that misses only $t-1$
vertices of $K_h-\cup_{j=1}^{i} H_j$. Color this matching with $i$. 

Now we consider the set of $t-1$ vertices in $K_h-\cup_{j=1}^{i} H_j$ not incident to an edge of color $i$. 
We can embed $Y_i$ into these vertices and color the corresponding edges between $X_i$ and $Y_i$ (these are leaf edges of $T_i$) 
with $i$ as no edges between $H_i$ and $K_h-\cup_{j=1}^{i} H_j$ have been colored.

Finally, there remains $2n^{1/3}=8t$ unfinished vertices in $N_i$. For each color $j \in [i-1]$ there is at most one edge of color $j$ incident to each vertex in $K_{8t}$. 
Thus by Claim~\ref{matching2} there is a matching of uncolored edges between the unfinished vertices of $N_i$ and $K_{8t}$. Coloring this matching with $i$ completes the packing of $T_i$.

{\bf Packing of type~I trees:} 
To complete the packing of $T_i$ we need to color edges incident to $H_i$ that correspond to the $h-(i-1)$ leaf edges removed from $T_i$. Recall that
these leaf edges form a star forest with each center vertex in $H_i$.
Observe that each vertex in $H_i$ is incident to at most one edge of color $j \in [i-1]$ with an endpoint in $\cup_{j=1}^{i-1} H_j$. 
Furthermore, all edges between $H_i$ and $K_h - \cup_{j=1}^{i} H_j$ are uncolored as are all edges between $H_i$ and $K_{8t}$. 
Thus the bipartite graph with classes $H_i$ and $(K_h - H_i) \cup K_{8t}$ is such that each vertex in $H_i$ is incident to at most $i-1$ colored edges.
Therefore each vertex in $H_i$ is incident to at least $|(K_h - H_i) \cup K_{8t}| - (i-1) = h-8t+8t-(i-1) = h-(i-1)$ uncolored edges between $H_i$ and $(K_h - H_i) \cup K_{8t}$.
So we may apply Claim~\ref{stars-claim} with $k=i-1$ to the bipartite graph of uncolored edges between $H_i$ and $(K_h - H_i) \cup K_{8t}$
to find the appropriate star forest removed from $T_i$. Coloring this star forest with $i$ completes the packing of $T_i$.
\end{proof}

\section{Proof of Theorem~\ref{main2}}\label{proof2}
The proof of Theorem~\ref{main2} follows the general structure of the proof of Theorem~\ref{main1}. 
However, we must introduce a new type of tree. 
Because we have no maximum degree condition we will need a class of graphs with lower maximum degree than in Theorem~\ref{main1}. This new class introduces a conflict with trees which are stars. This conflict forces us to pack into $K_{n+1}$ instead of $K_n$ if there are stars present in the sequence of trees. However, the extra vertex does allow several steps in the proof to be less delicate and thus simpler than their counterpart in the proof of Theorem~\ref{main1}.

\begin{proof}
Throughout the proof we will assume that $n$ is sufficiently large for
the appropriate inequalities to hold. Let $t=\frac{1}{10}n^{1/4}$.
We begin by first partitioning the set of trees into three classes.
\begin{enumerate}
\item[(1)] We call $T_i$ \emph{type~I} if there is a set of at most $n^{1/4}$ vertices such that the union of neighborhoods of these vertices contains at least $n^{1/2}$ leaves.
\item[(2)] We call $T_i$ \emph{type~II} if there is a set of at least $n^{1/2}$ independent leaves (and $T_i$ is not type~I).
\item[(3)] We call $T_i$ \emph{path-like} otherwise.
\end{enumerate}

Note that if a tree $T_i$ has a vertex of degree $2n^{3/4}$ then it must be of type I or type II.

\begin{claim}\label{v_and_path}
If $T_i$ is path-like, then $T_i$ contains a path of length $3t+2 = \frac{3}{10}n^{1/4}+2$
such that the internal vertices of the path have degree $2$ in $T_i$.
Furthermore, $T_i$ contains a set of $n^{1/2}$ vertices of degree $2$ such that any two vertices have distance at least $2$ from each other and from the endpoints of the path.
\end{claim}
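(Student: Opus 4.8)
The plan is to first extract from the two negative hypotheses (not type~I and not type~II) tight control on the numbers of leaves and branch vertices of $T_i$, and then read off both a long bare path and a large spread-out set of degree-$2$ vertices from this structure.

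First I would translate the hypotheses into degree information. Since a set of independent leaves is exactly a choice of one leaf per \emph{stem} (a vertex adjacent to a leaf), the maximum number of independent leaves equals the number of stems; as $T_i$ is not type~II, there are fewer than $n^{1/2}$ stems. Every leaf is adjacent to exactly one stem, so the number $n_1$ of leaves equals $\sum_v d_v$, where $d_v$ is the number of leaf-neighbours of the stem $v$ and the sum runs over the fewer than $n^{1/2}$ stems. Because $T_i$ is not type~I, the $n^{1/4}$ stems of largest leaf-degree together account for fewer than $n^{1/2}$ leaves, so every remaining stem has leaf-degree less than $n^{1/4}$; hence $n_1 < n^{1/2} + n^{1/2}\cdot n^{1/4} = n^{1/2} + n^{3/4} = (1+o(1))n^{3/4}$. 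Finally, the tree identity $\sum_{\deg(v)\ge 3}(\deg(v)-2) = n_1 - 2$ bounds the number of branch vertices (degree $\ge 3$) by $n_1 - 2 < n^{1/2}+n^{3/4}$ as well.

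For the bare path, I would suppress all degree-$2$ vertices of $T_i$ to obtain a reduced tree whose vertex set is the set of leaves and branch vertices, of size $n_1 + n_{\ge 3} < 2(n^{1/2}+n^{3/4})$ and hence with fewer than $2(n^{1/2}+n^{3/4})$ edges. Each reduced edge corresponds to a path of $T_i$ all of whose internal vertices have degree $2$, and the lengths of these paths sum to the number $N-1 \ge n-t$ of edges of $T_i$. Averaging gives a bare path of length at least $\frac{n-t}{2(n^{1/2}+n^{3/4})} = (\tfrac12 - o(1))n^{1/4}$, which exceeds $3t+2 = \tfrac{3}{10}n^{1/4}+2$ for $n$ large; any sub-path of length exactly $3t+2$ has the required property. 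For the spread-out degree-$2$ set, I would note $n_2 = N - n_1 - n_{\ge 3} \ge n - 3n^{3/4}$, and that the subgraph of $T_i$ induced on the degree-$2$ vertices has maximum degree at most $2$ and, being a subforest of a tree, is a disjoint union of paths on $n_2$ vertices; it therefore contains an independent set of size at least $n_2/2 \ge n/3$. Since path-like trees satisfy $\Delta(T_i) < 2n^{3/4}$, the two endpoints of the bare path have together fewer than $4n^{3/4}$ neighbours; deleting the endpoints and their neighbours from this independent set removes at most $4n^{3/4}+2$ vertices, leaving more than $n^{1/2}$ degree-$2$ vertices that are pairwise non-adjacent and non-adjacent to either endpoint. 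As distance at least $2$ is exactly non-adjacency, any $n^{1/2}$ of them form the desired set.

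The main obstacle is purely quantitative: the crude leaf bound $n_1 < 2n^{3/4}$ would only produce a bare path of length about $\tfrac14 n^{1/4}$, short of the required $\tfrac{3}{10}n^{1/4}$. The decisive point is to use the not-type~I hypothesis to sharpen this to $n_1 \le (1+o(1))n^{3/4}$, which halves the count of reduced edges and pushes the guaranteed path length past $\tfrac{3}{10}n^{1/4}$. Everything else is comfortable, since the degree-$2$ set only needs $n^{1/2}$ vertices out of the $\Theta(n)$ available.
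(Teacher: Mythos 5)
Your proposal is correct and takes essentially the same route as the paper: both use the failure of types I and II to bound the number of leaves (and hence, via $\sum_{d(x)>2}(d(x)-2)\le n_1$, the number of branch vertices) by roughly $n^{3/4}$, then obtain the long bare path by averaging over the maximal degree-$2$ segments, and finally select the spread-out degree-$2$ set from the $n-O(n^{3/4})$ vertices of degree $2$. The differences are only bookkeeping: you suppress degree-$2$ vertices and average over reduced edges where the paper deletes branch vertices and averages over components, and you make explicit (via the independent-set argument and the bound $\Delta(T_i)<2n^{3/4}$ for path-like trees) the final greedy selection that the paper leaves as a one-line remark.
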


\begin{proof}\renewcommand{\qedsymbol}{$\blacksquare$}
If $T_i$ is path-like, then $T_i$ has less than $n^{1/2}$ independent
leaves and no set of $n^{1/4}$ vertices whose neighborhood contains $n^{1/2}$ vertices of degree $1$.
Thus if we partition the set of neighbors of the independent leaves into sets of size 
$n^{1/4}$ we see that
the total number of leaves is less than
$n^{1/4}n^{1/2} = n^{3/4}$. 
Furthermore as each vertex $x$ of degree $d(x)>2$ contributes $d(x)-2$ leaves to
$T_i$, we have
$\sum_{\{x : d(x) > 2\}} d(x) -2 < n^{3/4}$. This sum
is at least number of vertices of degree greater than $2$. Thus if we
remove all vertices of degree greater than $2$ from $T_i$ we are left with a
forest of paths with at least $n-n^{3/4}$ vertices and less than
$n^{3/4}$ components. Thus there is a component of size at least
$\frac{n-n^{3/4}}{n^{3/4}} = n^{1/4}-1 > 3t$.

The tree $T_i$ has at most $n^{3/4}$ vertices of degree $1$ and at most
$n^{3/4}$ vertices of degree greater than $2$, thus $T_i$ has at least
$n-2n^{3/4}$ vertices of degree $2$. Thus for $n$ large enough we can
find a path and vertices of degree $2$ as required by the claim.
\end{proof}

For ease of notation put $h = \frac{1}{2}n^{1/2}$ and recall that $t = \frac{1}{10}n^{1/4}$, thus $25t^2 = \frac{1}{4}n^{1/2} = \frac{1}{2}h$. 
For each $i$ define $t'_i$, $t''_i$, and $p_i$ to be the number of type I, type II, and path-like trees, respectively, among $T_1,T_2,\dots, T_{i-1}$.

We partition the vertex set of $K_{n+1}$ into four parts of order $n-h-25t$, $h$, $25t$, and $1$. 
We will refer to these three parts as $K_{n-h-25t}$, $K_h$, $K_{25t}$, and $K_1$.
Now we partition the trees into parts corresponding roughly to the partition of $K_{n+1}$.

\medskip

{\bf Partition of type~I trees:}
For each tree $T_i$ of type~I, there is a set of at most $n^{1/4}$ vertices such that the union of their neighborhoods contains at least $n^{1/2}$ leaves. 
Thus there is a vertex, $x_i$, with $n^{1/4}$ leaves in its neighborhood. 
Let $S_i$ and $Y_i$ be disjoint sets of leaf neighbors of $x_i$ of sizes $3t-(t'_i+p_i)-1$ and $2t$, respectively.
If $T_i$ is not a star, then there is a vertex, $u_i$, different from $x_i$ that has a leaf neighbor, denoted by $v_i$.
If $T_i$ is a star, then let $v_i$ be a leaf neighbor of $x_i$ disjoint from  $S_i$ and $Y_i$.

Define $H_i$ as the union of $x_i$, $u_i$ (if it exists), the set of vertices of
degree greater than $n^{3/4}$, a set of at most $n^{1/4}$ vertices in $T_i$ such that the union of their neighborhoods contains at least
$n^{1/2}$ leaves, and an arbitrary set of vertices in $T_i - S_i- Y_i - v_i$ such that $|H_i| = 25t$.

Partition $T_i$ into six parts: $H_i$, $S_i$, $Y_i$, $v_i$, a set of $h - |S_i|-|Y_i|-1-(i-1)$ neighbors of $H_i$ of degree $1$, denoted by $L_i$, and the remaining 
$n-(i-1)-|H_i|-|S_i|-|Y_i|-1-(h -|S_i|-|Y_i|-1 - (i-1)) =n - h - 25t$ vertices, denoted by $F_i$.

\medskip

{\bf Partition of type~II trees:}
For $T_i$ type~II, let $Y_i$ be a set of $2t$ independent leaves and let $X_i$ be the set of neighbors of $Y_i$ (note that $|X_i|=2t$).
Define $H_i$ as the union of $X_i$, the set of vertices of degree greater than $n^{3/4}$, and an arbitrary set of vertices in $T_i-Y_i$ such that $|H_i| = 25t$.

Partition $T_i$ into four parts: $H_i$, $Y_i$, a set of $h-(i-1) - |Y_i|$ independent leaves that are not adjacent to $H_i$, denoted by $L_i$,
and the remaining $n-(i-1)-|H_i|-|Y_i|-(h - |Y_i| - (i-1)) = n-h-25t$ vertices, denoted by $F_i$.

\medskip

{\bf Partition of path-like trees:}
If $T_i$ is path-like, then let $P_{i}$ be a path on $3t-(t'_i+p_i)$ vertices that is contained in a path on $3t-(t'_i+p_i)+2$ vertices such that all vertices in $P_i$ are degree $2$ in $T_i$.
 Such a path exists by Claim~\ref{v_and_path}.
Let $Y_i$ be a set of $8t$ vertices of degree $2$ that are pairwise of distance at least $2$ from each other and from the endpoints of $P_i$
 and let $X_i$ be the set of neighbors of $Y_i$ (note that $|X_i|=16t$).
 Define $H_i$ to be the union of the set of the two neighbors of the endpoints of $P_i$, the set $X_i$, and an arbitrary set of vertices in $T_i-P_i-Y_i$ such that $|H_i| = 25t$ (note that a path-like tree has no vertex of degree greater than $n^{3/4}$).

Partition $T_i$ into five parts: $H_i$, $P_i$, $Y_{i}$, a set of $h-|P_i|-|Y_i|-(i-1)$ vertices of degree $2$ that are pairwise of distance $2$ from each other and not adjacent to $H_i$ or $P_i$, denoted by $L_i$, and the remaining $n-(i-1)-|H_i|-|P_i|-|Y_i|-(h -|P_i|-|Y_i| - (i-1)) = n-h-25t$ vertices, denoted by $F_i$.

\medskip

{\bf Packing into $K_{n+1}$:}
Observe that for each $i$, $F_i$ is a forest on $n-h-25t$ vertices 
and $\Delta(F_i) \leq n^{3/4} < \frac{10}{3}(n^{3/4}-\frac{1}{2}n^{1/4} - \frac{25}{10}) = \frac{1}{3t}(n - \frac{1}{2}n^{1/2} - \frac{25}{10}n^{1/4}) = \frac{1}{3t}(n-h-25t)$.
Thus by Corollary~\ref{kssz-cor} we can pack the forests into $K_{n-h-25t}$.
In other words there is an edge-coloring of $K_{n-h-25t}$ such that the
edges of color $i$ contain $F_i$.

We can pack each $H_i$ vertex-disjointly into $K_h$ as $\sum_{i=1}^t |H_i| = 25t^2 = \frac{25}{100}n^{1/2} < \frac{1}{2}n^{1/2} = h$.
Because the sets $H_i$ are disjoint in $K_{h}$, for each $i$ the edges $T_i[H_i,F_i]$ in $K_{n+1}$ can be colored with $i$.

For each type~I tree $T_i$, the set $S_i \cup \{x_i\}$ is a star on $3t-(t'_i+p_i)$ vertices and for each path-like tree $T_i$, the set $P_i$ is a path on $3t-(t'_i+p_i)$ vertices.
These stars and paths form a set of trees on $3t,3t-1,\dots, 3t-(t'+p)+1$ vertices (where $t'+p$ is the total number of type~I plus path-like trees). 
By Claim~\ref{stars-paths} these stars and paths pack into a $K_{3t}$ such that the endpoints of the paths are pairwise disjoint in this $K_{3t}$.
Now we embed $K_{3t}$ into $K_h$ in such a way that for each type I tree $T_i$ the center of $S_i \cup \{x_i\}$ in this $K_{3t}$ corresponds to the vertex $x_i$ packed into $H_i$ and the other vertices of the $K_{3t}$ are disjoint from $\cup_{j=1}^t H_j$ in $K_h$. 
For each type~i tree $T_i$ color with $i$ the edges of $S_i\cup \{x_i\}$ in the $K_{3t}$.
Now for each path-like tree $T_i$ color with $i$ the edges of $P_i$ in the $K_{3t}$ and the edge between each endpoint of $P_i$ in the $K_{3t}$ and its neighbor that is in $H_i$. 
Observe that for each type I tree $T_i$, the vertex $x_i$ is incident to at most $i$ edges of color other than $i$. 
Indeed each vertex $x_i$ is incident to at most $t'_i+p_i \leq i-1$ edges of color other than $i$ in the $K_{3t}$ and possibly one more edge if $x_i$ corresponds to the end of a path packed into the $K_{3t}$. Thus for each type I tree $T_i$ let us distinguish two simple cases:

{\bf Case~A:} 
If $T_i$ is not a star, then consider the vertex $x_i$ in $K_{3t}$. By the above step there may be an edge of color $j \neq i$ between $x_i$ and a vertex, denoted by $z$, in $H_j$ (i.e.\ if $x_i$ is the endpoint of a path $P_j$ in the $K_{3t}$). In this case identify the vertex $z$ with $u_i$ and color with $i$ the edge between $u_i$ in $H_i$ and $z$ in $H_j$. If the vertex $x_i$ does not correspond to the end of a path $P_j$, then identify an arbitrary vertex in $K_h - \cup_{i=1}^t H_i - K_{3t}$ with $u_i$ and color with $i$ the edge between $u_i$ and $v_i$.

{\bf Case~B:} 
If $T_i$ is a star, then there is no vertex $u_i$, thus we will identify the vertex $K_1$ with $v_i$ and color the edge between $x_i$ and $v_i$ with color $i$. 

We remark that {\bf Case~B} is the only time when $K_1$ is needed to complete the packing of the trees. Thus if there is no tree which is a star, then we are able to pack into $K_n$.

At this point for each tree $T_i$ the edges of color $i$ in $K_{n+1}$ induce a subgraph of $T_i$ formed from $T_i$ minus some of the vertices of degree $1$ or $2$ (that are pairwise non-adjacent in $T_i$).

We will complete the packing of the trees in three rounds by type in the following order: type~II, path-like, type I. We will pack the trees from largest size to smallest i.e.\ when packing $T_i$ we may assume that all trees of the same type among $T_1,T_2,\dots, T_{i-1}$ have already been packed into $K_{n+1}$. 
A set of vertices in $K_{n+1}$ are called \emph{finished (in color $i$)} if each vertex has as many incident edges of color $i$ as its degree in $T_i$. Otherwise it is \emph{unfinished (in color $i$)}. 

\medskip

{\bf Packing of type II trees:}
Recall that $L_i$ is a partition class of $T_i$ that contains $h - |Y_i| -(i-1)$ independent leaves.
Let $N_i$ be the set of neighbors of $L_i$ in $T_i$, so $|N_i| = h - |Y_i| -(i-1)$. Observe that $N_i \subset F_i$ therefore the vertices $N_i$ are packed into $K_{n-h-25t}$.
To complete the packing of $T_i$ we need to find a matching of uncolored edges between $N_i \cup X_i$ and a set of vertices in $K_{n+1}$ with no incident edge of color $i$. These edges will represent the edges between $N_i \cup X_i$ (which are already packed into $K_{n+1}$) and $L_i \cup Y_i$ (which have not been packed into $K_{n+1}$). Thus coloring the edges of this matching with $i$ will complete the packing of $T_i$.

Consider the bipartite graph between $N_i$ and $K_h - H_i$ in $K_{n+1}$. 
Observe that at this point any color but $i$ have been used on the edges of this bipartite graph and that the edges in a single color class form a forest. 
Thus we have removed at most $t-1$ forests from a complete bipartite graph with class sizes: 
$$|K_h - H_i| = h-25t > 4(t-1)^2$$
and
$$|N_i|  = h - |Y_i| - (i-1) > h -3t > |K_h - H_i| + (t-1).$$
So we may apply Claim~\ref{matching-claim} to the bipartite graph between $N_i$ and $K_h - H_i$ with $t-1$ forest removed.
Therefore there is a matching of uncolored edges between $N_i$ and $K_h - H_i$ that misses only $t-1$ vertices of $K_h - H_i$.
Color the edges of the matching with $i$ such that $2t+(i-1)$ vertices of $K_h - H_i$ are not incident to an edge of color $i$.

Now we consider the set of $2t+(i-1)$ vertices in $K_h - H_i$ not incident to an edge of color $i$.
For each type~I tree $T_j$ that is larger than $T_i$, there
is exactly one vertex $x_j$ in $K_{3t}$.
Thus there is a set of $2t$ vertices in $K_h$ that are not incident to an edge of color $i$ and are disjoint from the vertices $x_j$ for $j < i$.
Identify these $2t$ vertices with $Y_i$ and consider the bipartite graph between $X_i$ and $Y_i$.
Each edge in $X_i \subset H_i$ is incident to at most one edge of each color other than $i$, 
thus the bipartite graph of uncolored edges between $X_i$ and $Y_i$ is the graph obtained by removing at most $t-1$ matchings from a complete bipartite graph. Thus by Claim~\ref{matching2} there is a perfect matching between $X_i$ and $Y_i$. Coloring the edges of this matching with $i$ embeds $Y_i$ into $K_{n+1}$.

Finally, there remains $25t = \frac{25}{10}n^{1/4}$ unfinished vertices in $N_i$. For each color $j \in [i-1]$ there is at most one edge of color $j$ incident to each vertex in $K_{25t}$.
So we have removed at most $i-1$ matchings from a complete bipartite graph with class sizes $25t$.
Thus by Claim~\ref{matching2} there is a perfect matching of uncolored edges between the unfinished vertices of $N_i$ and $K_{25t}$. Coloring this perfect matching with $i$ completes the packing of $T_i$.

\medskip

{\bf Packing of path-like trees:}
Recall that $L_i$ is a partition class of $T_i$ that contains $h-|P_i|-|Y_i|-(i-1)$ vertices of degree $2$ that are pairwise of distance $2$ from each other.
Let $N_i$ be the set of neighbors of $L_i$ in $T_i$. Each vertex in $L_i$ has exactly two neighbors in $N_i$ and no two vertices in $L_i$ share a neighbor in $N_i$, so $|N_i| = 2|L_i|=2(h-|P_i|-|Y_i|-(i-1))$.
Furthermore, each vertex in $L_i$ can be associated with two unique vertices in $N_i$. We call two such vertices in $N_i$ a \emph{pair}.

Recall that $K_{3t}$ only intersects $H_i$ if $T_i$ is type~I, so for a path-like tree $T_i$ we have that $H_i$ and $K_{3t}$ are disjoint.
Now we consider the bipartite graph between $N_i$ and $K_h-H_i-K_{3t}$. At this point any color except $i$ may have been used on the edges between $N_i$ and $K_h-H_i-K_{3t}$
and that the edges in a single color class form a forest.
First let us contract each pair in $N_i$ such that if either of the two edges identified together are already used in the embedding of a tree, then
 the resulting edge is not in the contraction. Denote the contraction of $N_i$ by $N_i'$.
If we contract each pair in $N_i$, then each forest between $N_i'$ and $K_h-H_i-K_{3t}$ becomes the union of two forests.
The complete bipartite graph between $N_i'$ and $K_h-H_i-K_{3t}$ has class sizes:

$$ |K_h-H_i-K_{3t}| = h-25t -3t \geq 4(2(t-1))^2 $$
and
$$
|N_i'| = \frac{1}{2}|N_i| = h -|P_i|- |Y_i| - (i-1) > h -12t >  |K_h-H_i-K_{3t}| + 2(t-1).
$$

The colored edges of this complete bipartite graph form $2(t-1)$ forests, 
so we may apply Claim~\ref{matching-claim} to the bipartite graph of uncolored edges between $N_i'$ and $K_h-H_i-K_{3t}$.
Therefore there is a matching of uncolored edges between $N_i'$ and $K_h-H_i$
that misses only $2(t-1)$ vertices of $K_h-H_i$. 
Now we consider a subgraph of this matching such that exactly $8t+t''_i$ vertices of $K_h - H_i-K_{3t}$ not incident to an edge of the subgraph.
If we return to the uncontracted set $N_i$, then for each edge of the subgraph of the matching we have two edges between a pair in $N_i$ and a vertex
in $K_h - H_i-K_{3t}$. We color these edges with $i$.

Now there are $8t+t''_i$ vertices in $K_h - H_i-K_{3t}$ are not incident to an edge of color $i$. Identify $8t$ of these vertices with $Y_i$ and consider the bipartite graph between $Y_i$ and $X_i \subset H_i$. As before the vertices in $X_i$ can be arranged as pairs of neighbors of degree $2$ vertices of $T_i$. Each edge in $X_i$ is incident to at most $2(t-1)$ edges with colors other than $i$. If we contract the pairs in $X_i$ as before to get $X'_i$, then each vertex in $X'_i$ is incident to at most $4(t-1)$ edges with colors other than $i$.
Thus by Claim~\ref{matching2} there is a perfect matching of uncolored edges between $X'_i$ and $Y_i$. Returning to
the uncontracted set $X_i$, for each edge of the perfect matching we have two edges between a pair in $X_i$ and a vertex in $Y_i$. Color these edges with $i$. Observe that there are $t''_i$ vertices in $K_h- K_{3t}$ not incident to an edge of color $i$ and $t'_i+p_i$ vertices in $K_{3t}$ not incident to an edge of color $i$.
Thus there are exactly $t''_i+t'_i+p_i=i-1$ vertices in $K_h$ not incident to an edge of color $i$.

Finally, there remains $25t = \frac{25}{10}n^{1/4}$ unfinished pairs in $N_i$. For each color $j \in [i-1]$ there are at most two edges of color $j$ incident to each vertex in $K_{25t}$.
If we contract the unfinished pairs of $N_i$ as before, we are left with a complete bipartite graph with $4(t-1)$ matchings removed. 
Thus by Claim~\ref{matching2} there is a perfect matching of uncolored edges between the contracted unfinished pairs of $N_i$ and $K_{25t}$. 
Uncontracting these pairs and coloring the edges corresponding to the perfect matching with color $i$ completes the packing of $T_i$.

\medskip

{\bf Packing of type I trees:}
To complete the packing of $T_i$ we need to color edges incident to $H_i$ that correspond to the $h-(i-1)$ leaf edges removed from $T_i$.
Recall that these leaf edges form a star forest with each center vertex in $H_i$. Observe that each vertex in $H_i$ is incident to at
most $2$ edges of each color other than $i$.
Therefore each vertex in $H_i-x_i$ is incident to at least $|(K_h - H_i) \cup K_{25t}| - 2(t-1) = h-25t+25t-2(t-1) = h-2(t-1)$ uncolored edges with an endpoint in $(K_h - H_i) \cup K_{25t}$.
So we may apply Claim~\ref{stars-claim} with $k=2(t-1)$ to the bipartite graph of uncolored edges between $H_i-x_i$ and $(K_h - H_i) \cup K_{25t}$
to find the appropriate star forest removed from $T_i$. Coloring this star forest with $i$ finishes each vertex in $H_i-x_i$.
Now there are $2(t-1)+t''_i$ vertices in $K_h-K_{3t}$ that are not incident to an edge of color $i$. We can identify $2(t-1)$ of these vertices with $Y_i$
and color the edges between $x_i$ and $Y_i$ to complete the packing of $T_i$.
\end{proof}

\begin{proof}[Proof of Proposition~\ref{main2-cor}]
We repeat the proof of Theorem~\ref{main2} and observe that if there is no star in the set of trees, then {\bf Case B} above never occurs. This is the only situation when the vertex in $K_1$ is used. Thus we are only packing the trees into $K_n$ which is what is claimed by Proposition~\ref{main2-cor}. 
\end{proof}

\section*{Acknowledgments}
We would like to thank Hong Liu and Bernard Lidick\'y for a careful reading of the manuscript.

\bibliographystyle{acm}
\bibliography{treepackingbib}

\end{document}